\documentclass[11pt]{article}
\usepackage[letterpaper,margin=1in]{geometry}
\usepackage[T1]{fontenc}
\usepackage[utf8]{inputenc}
\usepackage{lmodern}
\usepackage{amsmath,amssymb,amsthm,mathtools,bm}
\usepackage{microtype}
\usepackage{needspace}
\usepackage{booktabs,array,enumitem}
\usepackage{algorithm}
\usepackage{algpseudocode}
\usepackage[numbers,sort&compress]{natbib}
\usepackage[hidelinks]{hyperref}
\allowdisplaybreaks
\setlist[itemize]{leftmargin=1.5em,itemsep=3pt,topsep=4pt}
\setlist[enumerate]{leftmargin=1.7em,itemsep=3pt,topsep=4pt}

\newtheorem{theorem}{Theorem}
\newtheorem{corollary}{Corollary}
\newtheorem{lemma}{Lemma}
\newtheorem{proposition}{Proposition}
\newtheorem{assumption}{Assumption}
\theoremstyle{definition}
\newtheorem{definition}{Definition}
\theoremstyle{remark}

\newcommand{\R}{\mathbb R}
\newcommand{\E}{\mathbb E}
\newcommand{\Prob}{\mathbb P}
\newcommand{\Y}{\mathcal Y}
\newcommand{\F}{\mathcal F}
\newcommand{\Ocal}{\mathcal O}
\newcommand{\norm}[1]{\left\|#1\right\|}
\newcommand{\ip}[2]{\left\langle#1,#2\right\rangle}
\newcommand{\bphi}{\bar\varphi}
\newcommand{\eps}{\varepsilon}
\newcommand{\hx}{\hat x}
\DeclareMathOperator{\supp}{supp}

\title{\Large\bfseries Tight Stochastic Condition-Number Dependence in\\
Nonconvex-Strongly-Concave Minimax Optimization}
\author{Qihao Zhou\thanks{Independent Researcher. Email: \texttt{qihao\_zhou@berkeley.edu}.}}
\date{}
\hypersetup{pdftitle={Tight Stochastic Condition-Number Dependence in Nonconvex-Strongly-Concave Minimax Optimization},pdfauthor={Qihao Zhou}}

\begin{document}
\maketitle

\begin{abstract}
We study whether the linear condition-number dependence in the stochastic complexity of SAPD+ is necessary for nonconvex-strongly-concave minimax optimization. For jointly $L$-smooth objectives with dual strong-concavity parameter $\mu$, we prove a lower bound that matches the SAPD+ upper bound under the same Moreau-envelope stationarity criterion and the same primal-dual initialization gap. Specifically, when $\sigma\ge\eps$, the worst-case complexity of zero-respecting algorithms is $\Theta(\kappa LG\sigma^2\eps^{-4})$ in the stated accuracy regime, where $\kappa=L/\mu$, $G$ bounds the initial primal-dual gap, and $\sigma^2$ bounds the variance of a general unbiased first-order oracle. The lower bound is realized on a smooth problem class with a bounded dual box. Our construction routes each link of a nonconvex zero-chain through a dual gradient of magnitude proportional to $\eps/\sqrt\kappa$, while an undiscovered primal coordinate prevents stationarity. It also yields the primal-gradient lower bound $\Omega(L\Delta(\sqrt\kappa\eps^{-2}+\kappa\sigma^2\eps^{-4}))$ after combination with the known deterministic bound, where $\Delta$ bounds the initial primal function gap.
\end{abstract}

\section{Introduction}\label{sec:intro}

We consider the minimax problem
\begin{equation}\label{eq:problem}
\min_{x\in\R^{d_x}}\max_{y\in\Y}f(x,y),
\end{equation}
where $f$ is jointly $L$-smooth and $\mu$-strongly concave in $y$, but may be nonconvex in $x$. Strong concavity makes the maximizer $y^*(x)$ unique and the primal function $\Phi(x):=\max_{y\in\Y}f(x,y)$ differentiable. We study stochastic first-order methods and ask how their oracle complexity depends on the condition number $\kappa=L/\mu$.

The deterministic condition-number dependence is understood up to logarithmic factors. Under their respective domain, initialization, and output guarantees, accelerated methods attain a $\widetilde O(\sqrt\kappa)$ dependence on $\kappa$ \citep{lin2020near,zhang2021complexity}. Deterministic lower bounds contain the term $\Omega(\sqrt\kappa L\Delta\eps^{-2})$, where $\Delta$ bounds the initial primal gap \citep{li2021complexity,zhang2021complexity}. The stochastic setting leaves a larger gap. For zero-respecting algorithms, the lower bound of \citet{li2021complexity} contains a stochastic term proportional to $\kappa^{1/3}$, whereas SAPD+ has a stochastic upper bound proportional to $\kappa$ \citep{zhang2022sapd}. The two results use different stationarity measures and initialization budgets, which we distinguish below. This raises the question:
\begin{center}
\emph{Is the linear dependence on $\kappa$ in the stochastic complexity of SAPD+ necessary?}
\end{center}

We answer this question affirmatively for zero-respecting algorithms under a general unbiased bounded-variance oracle. We use the criterion and initialization parameter in the SAPD+ guarantee: the Moreau envelope $\Phi_\eta$ with $\eta=1/(2L)$ and the initial primal-dual gap
\[
G_0=\max_{y\in\Y}f(0,y)-\inf_x f(x,0)
\]
Over the problem class defined in Section~\ref{sec:prelim}, with $G_0\le G$ and $\sigma\ge\eps$, we establish
\begin{equation}\label{eq:intro-tight}
\Theta\!\left(\frac{\kappa LG\sigma^2}{\eps^4}\right)
\quad\text{calls to ensure}\quad
\E\norm{\nabla\Phi_\eta(\hx)}\le\eps.
\end{equation}
The upper bound is attained by SAPD+ with a uniformly selected outer iterate. Our contribution is the matching lower bound. Thus the linear condition-number dependence persists even when the stationarity measure and initialization budget are matched exactly.

\paragraph{Why the stochastic term is linear in $\kappa$.}
A probability zero-chain conceals the next coordinate until a Bernoulli trial succeeds. The allowable failure probability depends on the size of the concealed gradient. We construct a primal function with $\Theta(L\Delta/\eps^2)$ sequential links and a gradient larger than $2\eps$ whenever its final coordinate remains undiscovered. Each link is represented by maximization over two dual variables. A separable correction removes direct communication between neighboring primal coordinates, so progress must pass through a dual gradient of size $\Theta(\eps/\sqrt\kappa)$. A variance budget $\sigma^2$ then permits an expected waiting time of $\Theta(1+\kappa\sigma^2/\eps^2)$ per link. Multiplying the chain length by this waiting time yields the desired stochastic term.

\paragraph{Contributions.}
Our main results are the following.
\begin{itemize}
\item \textbf{A matching lower bound for SAPD+.} We prove~\eqref{eq:intro-tight} for a class with bounded dual boxes and a bounded initial primal-dual gap. The lower and upper bounds use the same oracle model and Moreau-envelope criterion; the comparison does not lose a logarithmic factor.
\item \textbf{A linear-$\kappa$ primal-gradient lower bound.} Under an initial primal-gap budget $\Delta$, we prove $\Omega(L\Delta\eps^{-2}+\kappa L\Delta\sigma^2\eps^{-4})$ for $\E\norm{\nabla\Phi(\hx)}\le\eps$. Combining this with the known deterministic lower bound replaces the first term by $\Omega(\sqrt\kappa L\Delta\eps^{-2})$.
\item \textbf{A strongly concave dual probability zero-chain.} The construction combines the dual Hessian $-\mu I$ with a separable primal correction, so each link must be discovered through a dual signal of size $\Theta(\eps/\sqrt\kappa)$. Its controlled cross derivatives allow the lower bound to pass to SAPD+'s stationarity criterion with only an absolute change in accuracy. Section~\ref{sec:related} relates this construction to existing uses of stochastic concealment in dual and lower-level variables.
\end{itemize}

Section~\ref{sec:prelim} specifies the model, Section~\ref{sec:related} reviews related work, and Section~\ref{sec:results} states the results. Section~\ref{sec:construction} proves the primal-gradient lower bound. Section~\ref{sec:sapd-proof} then matches the gap and stationarity measure to complete the SAPD+ comparison. Auxiliary estimates and a matching procedure for the explicit construction are deferred to the appendix.

\section{Preliminaries}\label{sec:prelim}

We use $\norm{\cdot}$ for the Euclidean norm of a vector and the spectral norm of a matrix. For an integer $d\ge1$, let $[d]=\{1,\ldots,d\}$. For $v\in\R^d$, write $\supp(v)=\{i\in[d]:v_i\ne0\}$, and let $e_i$ denote a standard basis vector. The notation $[x;y]$ denotes the concatenation of two vectors. We use $O(\cdot)$, $\Omega(\cdot)$, and $\Theta(\cdot)$ to hide absolute constants; $\widetilde O(\cdot)$ additionally hides logarithmic factors.

\subsection{Smoothness and primal-gap class}

We impose the following assumptions on problem~\eqref{eq:problem}.

\begin{assumption}\label{asm:domain}
The set $\Y\subseteq\R^{d_y}$ is nonempty, closed, and convex, and contains the origin.
\end{assumption}

\begin{assumption}\label{asm:smooth}
The function $f$ is continuously differentiable on a neighborhood of $\R^{d_x}\times\Y$ and is jointly $L$-smooth on this domain:
\[
\norm{\nabla f(x,y)-\nabla f(x',y')}\le L\norm{[x-x';y-y']}
\]
for all $x,x'\in\R^{d_x}$ and $y,y'\in\Y$.
\end{assumption}

\begin{assumption}\label{asm:concave}
For every $x\in\R^{d_x}$, the function $f(x,\cdot)$ is $\mu$-strongly concave on $\Y$, where $0<\mu\le L$:
\[
f(x,y)\le f(x,y')+\ip{\nabla_yf(x,y')}{y-y'}-\frac\mu2\norm{y-y'}^2
\]
for all $y,y'\in\Y$.
\end{assumption}

\begin{assumption}\label{asm:gap}
The primal function $\Phi(x)=\max_{y\in\Y}f(x,y)$ is bounded below and satisfies
\[
\Phi(0)-\inf_{x\in\R^{d_x}}\Phi(x)\le\Delta.
\]
\end{assumption}

Under Assumptions~\ref{asm:domain}--\ref{asm:concave}, the maximizer $y^*(x)$ exists and is unique. Indeed, since $0\in\Y$, strong concavity gives
\[
f(x,y)\le f(x,0)+\ip{\nabla_y f(x,0)}{y}-\frac\mu2\norm{y}^2.
\]
The right-hand side tends to $-\infty$ as $\norm{y}\to\infty$, so continuity and closedness of $\Y$ give existence; strong concavity gives uniqueness. This estimate also bounds the maximizers locally uniformly in $x$. Danskin's theorem therefore gives $\nabla\Phi(x)=\nabla_x f(x,y^*(x))$, including when $\Y$ is unbounded; see \citet[Lemma~4.3]{lin2020gradient} for the bounded-domain case. Let $\F(L,\mu,\Delta)$ denote the class of problems satisfying Assumptions~\ref{asm:domain}--\ref{asm:gap}, with arbitrary finite dimensions and admissible dual sets. We write $\kappa=L/\mu$.

\subsection{Oracle model and algorithms}

\begin{definition}[Stochastic first-order oracle]\label{def:oracle-model}
An oracle $\Ocal$ for $f$ has variance at most $\sigma^2$ if, for every feasible query $(x,y)$,
\begin{equation}\label{eq:oracle-model}
\E_\xi[\Ocal(x,y;\xi)]=\nabla f(x,y),\qquad
\E_\xi\norm{\Ocal(x,y;\xi)-\nabla f(x,y)}^2\le\sigma^2.
\end{equation}
Each call uses a fresh independent sample $\xi$.
\end{definition}

The model in Definition~\ref{def:oracle-model} permits arbitrary unbiased gradient estimators. It does not require $\Ocal(x,y;\xi)$ to be the gradient of a differentiable sample loss, and it does not impose mean-squared smoothness. The lower bound below is proved for this oracle model.

\begin{definition}[Zero-respecting algorithm]\label{def:zero}
An algorithm starts at $(x^0,y^0)=(0,0)$ and has zero-query output $\hx^0=0$. Every query $(x^t,y^t)$ lies in $\R^{d_x}\times\Y$, and every output $\hx^t$ lies in $\R^{d_x}$. Write $(g_x^s,g_y^s)=\Ocal(x^s,y^s;\xi^s)$. The algorithm is zero-respecting if, for every $t\ge1$, its next query and its output after $t$ calls satisfy
\begin{align}\label{eq:zero}
\supp(x^t)\cup\supp(\hx^t)&\subseteq\bigcup_{s=0}^{t-1}\supp(g_x^s),&
\supp(y^t)&\subseteq\bigcup_{s=0}^{t-1}\supp(g_y^s).
\end{align}
Queries may be adaptive and may use internal randomness independent of the oracle samples.
\end{definition}

This definition includes gradient methods and many momentum and coordinatewise adaptive methods. Projection onto a box containing the origin also preserves the property. The definition allows a discovered coordinate to be reset to zero; discovery refers to the union of past oracle supports, rather than the support of the current query.

We measure complexity by a deterministic upper bound on the number of oracle calls needed to meet the stated expected-norm stationarity criterion. An algorithm that stops earlier can be padded with queries at the origin. A lower bound is dimension-free: the dimensions of a hard instance may depend on the problem parameters and on the target accuracy.

\subsection{The class and criterion for the SAPD+ comparison}

To compare with SAPD+, we use the initial primal-dual gap
\begin{equation}\label{eq:pdgap}
G_0:=\max_{y\in\Y}f(0,y)-\inf_{x\in\R^{d_x}}f(x,0).
\end{equation}
Let $\mathcal M(L,\mu,G)$ consist of problems satisfying Assumptions~\ref{asm:domain}--\ref{asm:concave} and the following conditions:
\begin{enumerate}[label=(\roman*)]
\item $\Y$ is a bounded box containing the origin;
\item $f$ is twice continuously differentiable on an open neighborhood of $\R^{d_x}\times\Y$;
\item $f(\cdot,y)$ is bounded below for every $y\in\Y$, and $G_0\le G$.
\end{enumerate}
These conditions ensure applicability of the SAPD+ result used below. Dimensions and box radii are unrestricted. Since $\Phi(x)\ge f(x,0)$, every member of $\mathcal M(L,\mu,G)$ has primal gap at most $G$. We retain separate symbols for the two budgets: a bound on the primal gap alone need not bound $G_0$.

The primal function is $L$-weakly convex: $\Phi(\cdot)+(L/2)\norm{\cdot}^2$ is a pointwise maximum of convex functions. Set $\eta=1/(2L)$ and define
\begin{equation}\label{eq:envelope}
\Phi_\eta(x)=\min_z\left\{\Phi(z)+\frac{\norm{z-x}^2}{2\eta}\right\}.
\end{equation}
The minimizer $z=\operatorname{prox}_{\eta\Phi}(x)$ is unique, and $\nabla\Phi_\eta(x)=(x-z)/\eta$. The SAPD+ comparison uses
\begin{equation}\label{eq:env-criterion}
\E\norm{\nabla\Phi_\eta(\hx)}\le\eps,
\end{equation}
whereas the primal-gradient lower bound uses
\begin{equation}\label{eq:criterion}
\E\norm{\nabla\Phi(\hx)}\le\eps.
\end{equation}
Write $\mathfrak N_{\rm env}(\mathcal M,\sigma,\eps)$ for the smallest deterministic call budget of a zero-respecting algorithm that guarantees~\eqref{eq:env-criterion} for every problem in $\mathcal M$ and every oracle satisfying Definition~\ref{def:oracle-model}. The bound is uniform over dimensions; the algorithm may know the class parameters and the dual box.

\section{Related Work}\label{sec:related}

\paragraph{Algorithms for nonconvex-strongly-concave minimax optimization.}
\citet{lin2020gradient} analyzed gradient descent ascent for nonconvex-concave and nonconvex-strongly-concave objectives. Accelerated proximal-point methods improve the deterministic condition-number dependence to $\widetilde O(\sqrt\kappa)$ \citep{lin2020near,zhang2021complexity}. The cited guarantees retain their own initialization and output conditions: \citet[Theorem~20]{lin2020near} state a constant-success-probability guarantee, while \citet[Corollary~4.1]{zhang2021complexity} retain an initial dual-error term. In the stochastic setting, SAPD+ combines an inexact proximal-point framework with stochastic accelerated primal-dual updates \citep{zhang2022sapd}. For all SAPD+ theorem numbers and output guarantees in this paper, we use the October 13, 2024 revision, arXiv:2205.15084v4, of that work. Its Theorems~1 and~3 and Remark~3 give the Moreau-envelope guarantee for a uniformly selected outer iterate. We use this guarantee directly and match its primal-dual gap parameter. Under additional oracle regularity, variance reduction gives different accuracy dependence \citep{luo2020stochastic}; our oracle model does not impose that regularity.

\paragraph{Lower bounds.}
Smooth nonconvex zero-chains provide stationary-point lower bounds for deterministic first-order methods \citep{carmon2020lower}. Randomly concealing the next chain coordinate produces the stochastic $\eps^{-4}$ lower bound under bounded variance \citep{arjevani2023lower}. For nonconvex-strongly-concave minimax optimization, deterministic lower bounds have the factor $\sqrt\kappa$ \citep{li2021complexity,zhang2021complexity}, while the stochastic term of \citet{li2021complexity} has the factor $\kappa^{1/3}$. Definition~4 of the latter work explicitly restricts its first-order algorithms to be zero-respecting. Its stochastic Theorem~2 uses a projected-gradient mapping on a constrained primal domain. Our Theorem~\ref{thm:lower} gives linear dependence on $\kappa$ for zero-respecting algorithms on an unconstrained primal domain, measured by $\norm{\nabla\Phi}$. The comparison concerns the stochastic condition-number dependence within this algorithmic framework.

\paragraph{Recent nonconvex-concave lower bounds.}
\citet{wu2026lower} establish deterministic and stochastic lower bounds for nonconvex-concave minimax problems with a bounded dual domain, using Moreau-envelope stationarity. Their stochastic construction preserves the primal value function, clips quadratic interactions along dual paths to bound the randomized gradients, and conceals the next dual coordinate with a Bernoulli oracle. These features are closely related to the mechanism used here. Their bounds are parameterized by the dual diameter and do not impose uniform strong concavity. Our construction uses two dual variables per primal link and the diagonal dual Hessian $-\mu I$. A separable correction preserves the desired primal chain while making the concealed signal proportional to $\eps/\sqrt\kappa$. The resulting bounds control joint smoothness, cross derivatives, and the initial primal-dual gap simultaneously. This strongly concave realization gives the linear-$\kappa$ lower bound and the comparison with SAPD+; a lower bound over the larger nonconvex-concave class alone does not imply a lower bound over its strongly concave subclass.

\paragraph{Bilevel and nonconvex-Polyak--\L{}ojasiewicz problems.}
\citet{ji2026lower} establishes deterministic and stochastic first-order lower bounds for nonconvex-strongly-convex bilevel optimization. The revised work of \citet{chen2026condition}, arXiv:2511.22331v4, merges the earlier Chen--Zhang preprint with that ICML paper and strengthens their results. Its stochastic first-order construction also conceals lower-level gradient information. These bilevel constructions use distinct upper- and lower-level objectives; minimax optimization requires the additional identity that the lower-level objective is the negative of the upper-level objective. Consequently, their bilevel lower bounds do not directly establish the minimax bound proved here. Separately, \citet{pan2026lower} prove a deterministic lower bound for nonconvex-Polyak--\L{}ojasiewicz minimax optimization. That result concerns a weaker dual regularity condition and does not establish the stochastic strongly concave bound considered here.

\Needspace{7\baselineskip}
\section{Main Results}\label{sec:results}

Our main result establishes the worst-case optimality of SAPD+ among zero-respecting algorithms over $\mathcal M(L,\mu,G)$ in the stochastic regime. Throughout the paper, $C=370$ is a fixed numerical constant.

\begin{theorem}[Tight stochastic complexity and SAPD+]\label{thm:sapd-tight}
Let $L,\mu,G,\eps>0$ and $\sigma\ge0$ satisfy
\begin{equation}\label{eq:tight-regime}
\kappa=\frac L\mu\ge C,\qquad
\eps^2\le\frac{LG}{440C},\qquad \sigma\ge\eps.
\end{equation}
For $\eta=1/(2L)$, the worst-case stochastic first-order oracle complexity over $\mathcal M(L,\mu,G)$ is
\begin{equation}\label{eq:main-tight}
\mathfrak N_{\rm env}(\mathcal M(L,\mu,G),\sigma,\eps)
=\Theta\!\left(\frac{\kappa LG\sigma^2}{\eps^4}\right).
\end{equation}
The upper bound is attained by SAPD+ with a uniformly selected outer iterate. The lower bound holds even when the primal component of the oracle is deterministic.
\end{theorem}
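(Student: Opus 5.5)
The argument has two halves: an upper bound imported from SAPD+ and a matching lower bound from a strongly concave probability zero-chain.

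\textbf{Upper bound.} I will quote the SAPD+ guarantee \citep[Theorems~1 and~3, Remark~3]{zhang2022sapd}. For $\eta=1/(2L)$ it gives $\E\norm{\nabla\Phi_\eta(\hx)}\le\eps$ after $O(\kappa LG_0\eps^{-2}+\kappa LG_0\sigma^2\eps^{-4})$ calls, with $\hx$ a uniformly selected outer iterate. Conditions (i)--(iii) defining $\mathcal M(L,\mu,G)$ are exactly its hypotheses. Since $\sigma\ge\eps$, the deterministic term is dominated, which gives $O(\kappa LG\sigma^2\eps^{-4})$. I also need to check that SAPD+ is zero-respecting. This holds because its updates are linear combinations of past stochastic gradients, followed by projection onto a box containing the origin.

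\textbf{Lower bound construction.} I will build, for $T=\Theta(LG/\eps^2)$, an instance in $\mathcal M(L,\mu,G)$. Its primal function $\Phi$ is a scaled Carmon-type chain of the form $\Phi(x)=\lambda\bar F_T(x/\beta)$, with $\lambda\beta^{-2}\approx L$ and $\lambda/\beta\approx\eps$. Each link coupling $x_i$ and $x_{i+1}$ is written as a maximum over two dual coordinates $y_{i,1},y_{i,2}$ in a bounded box, with dual Hessian $-\mu I$. The coupling is realized as $\langle y,\cdot\rangle$-type terms whose maximizer reproduces the link, plus a separable primal correction that cancels the direct $x_i$--$x_{i+1}$ cross term. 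With this arrangement, the only way to activate coordinate $i+1$ is through a dual gradient entry of size $\Theta(\eps/\sqrt\kappa)$. The scaling forces this size, since cross derivatives are at most $L$ and the dual curvature is $\mu$.

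\textbf{Oracle and counting.} The oracle returns the exact primal gradient. On the frontier dual coordinate it returns the true entry divided by $p$ with probability $p$, and zero otherwise. This is unbiased with variance $\Theta((\eps^2/\kappa)(1/p-1))$. Choosing $p=\Theta(\min\{1,\eps^2/(\kappa\sigma^2)\})$ meets the variance budget. Each link then needs $\Omega(1/p)$ calls to be discovered, so the total is $\Omega(T/p)=\Omega(\kappa LG\sigma^2\eps^{-4})$. A martingale/Chernoff argument as in \citet{arjevani2023lower} shows that with constant probability the last coordinate is undiscovered after half that many calls. On that event, by zero-respecting (Definition~\ref{def:zero}), $\norm{\nabla\Phi(\hx)}>2\eps$.

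\textbf{Main obstacle: passing from $\nabla\Phi$ to the Moreau envelope.} Since $\eta=1/(2L)$ and $\Phi$ is $L$-weakly convex, $z=\operatorname{prox}_{\eta\Phi}(\hx)$ satisfies $\nabla\Phi_\eta(\hx)=\nabla\Phi(z)$ and $\norm{z-\hx}\le\eta\norm{\nabla\Phi_\eta(\hx)}$. I then need the chain's robustness property: if $\hx$ has an undiscovered final coordinate, then any $z$ with small $\norm{\nabla\Phi(z)}$ is far from $\hx$. Equivalently, the large-gradient region is thick enough that $\norm{\nabla\Phi(z)}\ge\eps$ along the small prox displacement. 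This is where the controlled cross derivatives and the constants ($C=370$, $\eps^2\le LG/(440C)$) enter. I would attempt it with a separate argument: either adapt the prox step coordinatewise on the chain, or show directly that $\norm{\nabla\Phi_\eta(\hx)}\ge\eps$ whenever the last coordinates of $\hx$ vanish. The remaining work is verifying $G_0\le G$, including the dual box term, using $\kappa\ge C$ and the stated bound on $\eps^2$.
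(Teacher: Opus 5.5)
The upper bound and the architecture of your lower bound match the paper: dual-routed links with a separable primal correction, Bernoulli concealment at the frontier, and counting of successes. The gap is exactly the step you flag as the main obstacle. You never supply the conversion from the primal-gradient lower bound to the envelope criterion. The routes you float, a coordinatewise prox analysis or a geometric ``thickness'' argument on the chain, are not what is needed.

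The missing lemma is that the hard instance has an $\ell$-smooth primal function with $\ell\le 2L$. The paper gets this from the cross derivatives. If $\norm{\nabla^2_{xy}f}\le b$, the optimality inequalities and $\mu$-strong concavity make $y^*$ $(b/\mu)$-Lipschitz, also at the box boundary, so Danskin gives $\ell\le L+b^2/\mu$. The construction has $b\le 8.8\sqrt{\mu L_0}<\sqrt{\mu L}$, hence $\ell\le 2L$. Alternatively, an $O(L)$ bound can be read off from $\Phi(x)=L_0\lambda^2F_T(x/\lambda)$, since $F_T$ has a bounded tridiagonal Hessian.

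With $z=\operatorname{prox}_{\eta\Phi}(\hx)$ one has $\nabla\Phi(z)=\nabla\Phi_\eta(\hx)$ and $\norm{\hx-z}=\eta\norm{\nabla\Phi_\eta(\hx)}$, so
\[
\norm{\nabla\Phi(\hx)}\le\norm{\nabla\Phi(z)}+\ell\norm{\hx-z}\le(1+2L\eta)\norm{\nabla\Phi_\eta(\hx)}=2\norm{\nabla\Phi_\eta(\hx)}.
\]
You then run the primal-gradient construction at accuracy $2\eps$ with primal-gap budget $G/1.14$. The bound $G_0\le1.14\Delta$ comes from bounding the separable terms $S_i<13.6$. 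This step is where $\eps^2\le LG/(440C)$ is used, via $4\cdot96\cdot1.14<440$.

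Your heuristic that the dual signal has size $\Theta(\eps/\sqrt\kappa)$ ``since cross derivatives are at most $L$'' points the wrong way, and it matters here. For a generic member of the class with $b\approx L$, the comparison factor is $1+\eta(L+L^2/\mu)=\Theta(\kappa)$, which would wipe out the $\kappa$ gain of the lower bound. The signal size is instead forced by the matching condition $c^2/\mu=L_0\lambda^2$, so that maximizing out $y$ reproduces the chain at scale $L_0\lambda^2$. That same choice makes the cross derivatives $O(c/\lambda)=O(\sqrt{\mu L_0})$ rather than $\Theta(L)$. This is precisely why the envelope conversion costs only a factor $2$.
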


The upper bound in Theorem~\ref{thm:sapd-tight} is a specialization of Theorems~1 and~3 in the arXiv v4 revision of \citet{zhang2022sapd}. The new lower bound shows that the stochastic dependence on $\kappa$ in this guarantee cannot be improved for zero-respecting algorithms. It holds on a subclass with $\norm{\nabla^2_{xy}f}\le\sqrt{\mu L}$, so the difficulty does not require the largest cross derivatives allowed by joint $L$-smoothness. Section~\ref{sec:sapd-proof} gives the complete comparison, including the gap and stationarity conversion.

\begin{table}[htbp]
\centering
\caption{Matching bounds for the SAPD+ comparison under~\eqref{eq:tight-regime}. Both rows use $\mathcal M(L,\mu,G)$, variance at most $\sigma^2$, and $\E\norm{\nabla\Phi_{1/(2L)}(\hx)}\le\eps$.}\label{tab:results}
\small
\begin{tabular}{@{}lll@{}}
\toprule
Result & Oracle complexity & Algorithm scope\\
\midrule
SAPD+ \citep{zhang2022sapd} & $O(\kappa LG\sigma^2\eps^{-4})$ & A zero-respecting method\\[3pt]
Our lower bound & $\Omega(\kappa LG\sigma^2\eps^{-4})$ & All zero-respecting methods\\
\bottomrule
\end{tabular}
\end{table}

The construction also gives a lower bound under the usual primal-gap budget, for the gradient of $\Phi$ itself. This quantitative result is the main ingredient in the proof of Theorem~\ref{thm:sapd-tight}.

\begin{theorem}[Primal-gradient lower bound]\label{thm:lower}
Let $C=370$. For any $L,\mu,\Delta,\eps>0$ and $\sigma\ge0$ satisfying
\begin{equation}\label{eq:regime}
\kappa=\frac L\mu\ge C,\qquad
\eps^2\le\frac{L\Delta}{96C},
\end{equation}
there exist a problem $f\in\F(L,\mu,\Delta)$ with a bounded dual box and an oracle of variance at most $\sigma^2$ such that every zero-respecting algorithm satisfies $\E\norm{\nabla\Phi(\hx^N)}>\eps$ whenever
\begin{equation}\label{eq:lower-budget}
N\le c_0\left(\frac{L\Delta}{\eps^2}
+\frac{\kappa L\Delta\sigma^2}{\eps^4}\right),
\qquad c_0=\frac{1}{1536e^2C^2}.
\end{equation}
The primal component of the oracle is deterministic.
\end{theorem}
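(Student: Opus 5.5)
We will follow the probability zero-chain template of \citet{arjevani2023lower}. The new ingredient is that each chain link is routed through the dual variables, so that the concealed signal has size $\Theta(\eps/\sqrt\kappa)$ rather than $\Theta(\eps)$.

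\textbf{Chain and dual representation.} Start from the scaled Carmon et al.\ chain $\bar F_T(u)=\sum_i[\Psi(-u_{i-1})\Upsilon(-u_i)-\Psi(u_{i-1})\Upsilon(u_i)]$ on $\R^T$. Its gradient has norm larger than $1$ until the last coordinate is nonzero. We set $\Phi(x)=\lambda\sigma_0^2\bar F_T(x/\sigma_0)$ with $\lambda\sigma_0\asymp\eps$ (giving $\norm{\nabla\Phi}>2\eps$ on the undiscovered set) and $\lambda\asymp L$ (for smoothness). Then $T\asymp L\Delta/\eps^2$, with $\Delta$ absorbing the constant $\bar F_T(0)-\inf\bar F_T\lesssim T$. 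Each bilinear-type coupling term $h(x_{i-1},x_i)$ is written as a maximum over two dual coordinates $y_{i,1},y_{i,2}$:
\[
\max_{y}\Bigl\{a(x_{i-1})y-\tfrac\mu2y^2\Bigr\}=\tfrac{a(x_{i-1})^2}{2\mu},
\]
combined with a separable correction $-\tfrac{1}{2\mu}a(x_{i-1})^2+\dots$ in $x_{i-1}$ alone. The net primal function is then the desired chain, but $x_i$ interacts with $x_{i-1}$ only through $y_i$. Concretely, the link terms take the form $\langle b(x_i),y_i\rangle\, c(x_{i-1})$-type couplings with $\nabla^2_{xy}f$ of size $\le\sqrt{\mu L}$. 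This keeps joint $L$-smoothness (using $\kappa\ge C$), while at the optimum $y_i^*\sim$ (signal)$/\mu$. The dual gradient revealing coordinate $i$ at a point where $x_{i-1}$ is discovered is $\nabla_{y_i}f\sim a(x_{i-1})$. We design $a$ with magnitude $\Theta(\eps/\sqrt\kappa)$, since the product $a^2/\mu$ must reproduce a chain term of size $\lambda\sigma_0^2\sim\eps^2/L$, giving $a\sim\eps\sqrt{\mu/L}$. The dual box is chosen bounded and large enough to contain $y^*(x)$ for all $x$; a clipping of $a,b$ keeps gradients bounded uniformly.

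\textbf{Oracle.} The primal component is the exact $\nabla_xf$, which involves $x_i$ only through quantities already gated by $y_i$, so it is zero-respecting with no new discovery. The dual component multiplies the entry that would reveal the next coordinate by $\xi/p$ with $\xi\sim\mathrm{Bernoulli}(p)$. Its variance is at most $\norm{\nabla_y\text{-link}}^2(1-p)/p\lesssim(\eps^2/\kappa)/p$. Choosing $p=\min\{1,c\,\eps^2/(\kappa\sigma^2)\}$ keeps the variance at most $\sigma^2$. By the standard argument (each query advances the discovered prefix by at most one, and only with probability $\le p$), after $N$ calls the progress is dominated by a sum of $N$ Bernoulli$(p)$ variables. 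Alternatively, in the deterministic regime ($p=1$) progress is at most one step per call, noting that each link needs two steps (primal to dual to primal). A Chernoff/Markov bound gives $\Pr[\text{progress}\ge T]\le 1/2$ whenever $N\le c\,T/p\asymp c(L\Delta/\eps^2)(1+\kappa\sigma^2/\eps^2)$. On the complementary event the output $\hx^N$ has zero last coordinate, hence $\norm{\nabla\Phi(\hx^N)}>2\eps$, and $\E\norm{\nabla\Phi(\hx^N)}>\eps$.

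\textbf{Main obstacle.} The main obstacle is verifying the zero-chain property through the dual step together with simultaneous smoothness. We must ensure that $\nabla_y f$ at any point with $\supp$ restricted to the discovered prefix has no entries beyond the next dual coordinate. We must also ensure that the primal gradient cannot reveal $x_i$ before $y_i$ is discovered. Both require the coupling functions to vanish with zero derivative at $0$, as $\Psi$ does on $(-\infty,1/2]$. Finally, the separable correction must keep $f(x,\cdot)$ exactly $\mu$-strongly concave and jointly $L$-smooth with constant $C$. I would first check the Hessian blocks explicitly: $\nabla^2_{yy}=-\mu I$, $\norm{\nabla^2_{xy}}\lesssim\sqrt{\mu L}$, and $\norm{\nabla^2_{xx}}\lesssim L$, the latter including the correction term $a^2/\mu$, whose second derivative is $\sim L$ by the scaling. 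This is what fixes $C=370$ and $c_0$.
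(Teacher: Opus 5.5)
\textbf{Verdict: the architecture is right, but the core construction is missing.} Your overall plan is the paper's:
\begin{itemize}
\item an exact primal oracle;
\item a dual signal of size $c=\lambda\sqrt{\mu L_0}\asymp\eps/\sqrt\kappa$, masked by $\xi/p$ at the frontier link;
\item one Bernoulli success needed per link, so $P_N\le1+\sum_t\xi^t$, followed by Markov's inequality.
\end{itemize}
Your oracle-validity and counting arguments are essentially fine. The problem is the instance itself, which is where the content of the theorem lies. You never pin it down, and neither form you write produces a chain.

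\textbf{Why your two forms fail.} Maximizing $a(x_{i-1})y-\frac\mu2y^2$ gives $a(x_{i-1})^2/(2\mu)$. This depends on one coordinate only, and your separable correction then removes it, so no link survives. The trilinear form you mention, $\ip{b(x_i)}{y_i}$ times a function $g(x_{i-1})$, gives $g^2\norm{b}^2/(2\mu)\ge0$. This is nonnegative, so it cannot reproduce the negative link $-\psi(u_i)\varphi(u_{i+1})$ without a further shift that you do not supply.

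\textbf{The missing idea.} Make the dual coefficient a \emph{difference} of a function of $u_i$ and a centered function of $u_{i+1}$. With $q_i^\pm=\psi(\pm u_i)\mp\bphi(\pm u_{i+1})$, the cross terms of $\frac12\norm{q}^2$ equal the links of $F_T$ up to the separable term $\varphi_0(\psi(u_i)-\psi(-u_i))$. The separable $H$ cancels the squared terms and this remainder, so $H+\frac12\norm{q}^2=F_T$. With $c^2/\mu=L_0\lambda^2$, this gives $f=L_0\lambda^2F_T(x/\lambda)-\frac\mu2\norm{y-\frac c\mu q(x/\lambda)}^2$.

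\textbf{Your zero-derivative requirement is too strong.} The centered function $\bphi$ vanishes at $0$, so $q_k=0$ beyond the frontier and no dual link past the next one is revealed. But $\bphi'(0)=\sqrt e\ne0$, and this is what lets a nonzero $y_i^\pm$ reveal $x_{i+1}$ through the terms $-\frac c\lambda y_i^\pm\varphi'(\pm u_{i+1})$ in $\partial_{x_{i+1}}f$. If the later-coordinate coupling also had zero derivative at $0$, $x_{i+1}$ could never be discovered. That is incompatible with $\Phi$ being the chain, since $\nabla\Phi=\nabla_xf(\cdot,y^*(\cdot))$. Only the gating function $\psi$ needs to be flat at $0$.

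\textbf{The dual box cannot just be ``large enough''.} The block $\nabla^2_{xx}f$ contains $\frac c{\lambda^2}y_k^\pm\psi''(\pm u_k)$ and $\frac c{\lambda^2}y_{k-1}^\pm\varphi''(\pm u_k)$. Joint $L$-smoothness therefore needs $cR/\lambda^2=O(L)$, i.e.\ $R=\Theta(c/\mu)$ from above as well as below. The paper takes $R=5c/\mu$, just above $\max|y^*|<4.8c/\mu$, so that $cR/\lambda^2=5L_0$. Your Hessian check lists only the $a^2/\mu$ correction, and clipping $a,b$ does not control these $y$-weighted terms. The explicit constants $C=370$ and $c_0$ in the statement come from exactly this verification, which your sketch does not carry out.
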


For zero-respecting algorithms, the stochastic term in~\eqref{eq:lower-budget} has linear dependence on $\kappa$, improving the $\kappa^{1/3}$ dependence in the stochastic lower bound of \citet{li2021complexity}, with the domain and stationarity distinctions described in Section~\ref{sec:related}. No lower bound on the noise level is required: when $\sigma=0$, our oracle is deterministic. The constants in~\eqref{eq:regime} are chosen for a direct verification of joint smoothness rather than optimized numerical values.

\begin{corollary}[Combined worst-case lower bound]\label{cor:combined}
Under~\eqref{eq:regime}, the worst-case oracle complexity over $\F(L,\mu,\Delta)$, for zero-respecting algorithms satisfying~\eqref{eq:criterion}, is at least
\begin{equation}\label{eq:combined}
\Omega\!\left(L\Delta\left[\frac{\sqrt\kappa}{\eps^2}
+\frac{\kappa\sigma^2}{\eps^4}\right]\right).
\end{equation}
\end{corollary}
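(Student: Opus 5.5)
The corollary follows by taking the larger of two lower bounds, each valid over $\F(L,\mu,\Delta)$ for zero-respecting algorithms under~\eqref{eq:criterion}. The first is the stochastic term of Theorem~\ref{thm:lower}. The second is the known deterministic $\sqrt\kappa$ bound of \citet{li2021complexity,zhang2021complexity}. The only point needing care is that a deterministic lower bound remains valid when the algorithm is given a stochastic oracle with variance at most $\sigma^2$.

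\emph{Stochastic term.} Theorem~\ref{thm:lower} applies directly under~\eqref{eq:regime}. It gives a problem in $\F(L,\mu,\Delta)$ and an oracle of variance at most $\sigma^2$ such that every zero-respecting algorithm with $N\le c_0\kappa L\Delta\sigma^2\eps^{-4}$ calls has $\E\norm{\nabla\Phi(\hx^N)}>\eps$. Hence the worst-case complexity is at least $c_0\kappa L\Delta\sigma^2\eps^{-4}$.

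\emph{Deterministic term.} The exact gradient oracle $\Ocal(x,y;\xi)=\nabla f(x,y)$ satisfies Definition~\ref{def:oracle-model} for every $\sigma\ge0$, because its variance is zero. So any algorithm that succeeds against all admissible oracles must also succeed with exact gradients. I would then invoke the deterministic zero-chain lower bound of \citet{li2021complexity} (or \citet{zhang2021complexity}) for nonconvex-strongly-concave problems. It states that zero-respecting first-order methods need $\Omega(\sqrt\kappa L\Delta\eps^{-2})$ gradient evaluations to reach $\norm{\nabla\Phi(\hx)}\le\eps$. Their hard instance has an unconstrained primal domain and a strongly concave dual. Because the method is deterministic, the expectation in~\eqref{eq:criterion} is not an issue.

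\emph{Combining.} Since $\max\{a,b\}\ge(a+b)/2$, the maximum of the two bounds is at least half their sum, which gives~\eqref{eq:combined}.

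The main obstacle is checking that the cited deterministic bound applies to our class and regime without change. Three things must match. First, its instance must satisfy Assumptions~\ref{asm:domain}--\ref{asm:gap}, in particular $0\in\Y$ and a primal gap of at most $\Delta$. Second, its zero-respecting definition must be no more restrictive than Definition~\ref{def:zero}. Third, its accuracy condition must be implied by~\eqref{eq:regime}, possibly after shrinking $\eps$ or rescaling by an absolute constant. If the cited result is stated for a constant-probability or different output convention, I would rescale $\eps$ by an absolute constant and absorb the change into $\Omega(\cdot)$.
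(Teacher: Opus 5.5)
Your proposal is correct and follows the paper's proof essentially verbatim. The exact-gradient oracle has zero variance and is admissible for every $\sigma\ge0$, \citet[Theorem~1]{li2021complexity} supplies the $\sqrt\kappa$ term, Theorem~\ref{thm:lower} supplies the stochastic term, and $\max\{a,b\}\ge(a+b)/2$ combines them. The checks you flag are settled in the paper without rescaling $\eps$: the cited theorem requires only $L,\mu,\Delta,\eps>0$ and $\kappa\ge1$, and on its unconstrained domain its criterion is exactly $\norm{\nabla\Phi}$.

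One justification needs fixing. Definition~\ref{def:zero} allows internal randomness, so the method need not be deterministic even with exact gradients. The expectation is harmless for a different reason: the zero-chain obstruction holds pathwise. For every realization of the internal randomness, the output has an admissible support pattern, which forces $\norm{\nabla\Phi(\hx)}>\eps$; the paper makes exactly this point.
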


\begin{proof}
The deterministic lower bound of \citet[Theorem~1]{li2021complexity} gives the first term. Its parameter assumptions are $L,\mu,\Delta,\eps>0$ and $\kappa\ge1$, which follow from~\eqref{eq:regime}. Its hard instance has domain $\R^{d_x}\times\R^{d_y}$ and its conclusion is stated directly in terms of $\norm{\nabla\Phi}$. In particular, projection onto the primal domain is the identity, so the projected-gradient criterion introduces no conversion factor. The zero-chain obstruction is pointwise for every admissible support pattern and therefore also applies to internally randomized zero-respecting methods and their outputs. Its exact-gradient oracle has zero variance and is admissible for every $\sigma\ge0$. Theorem~\ref{thm:lower} gives the second term. Both constructions belong to the class considered here, though they may use different dual domains. The worst-case complexity is at least the maximum of these two lower bounds, and $\max\{a,b\}\ge(a+b)/2$ for $a,b\ge0$.
\end{proof}

The stochastic term in~\eqref{eq:combined} dominates when $\sigma^2$ is at least of order $\eps^2/\sqrt\kappa$. In particular, when $\sigma\ge\eps$, the lower bound reduces to $\Omega(\kappa L\Delta\sigma^2\eps^{-4})$.

Theorem~\ref{thm:sapd-tight} concerns the regime $\sigma\ge\eps$. Theorem~\ref{thm:lower} covers every $\sigma\ge0$, but does not assert that SAPD+ has optimal deterministic complexity. Appendix~\ref{app:gradient-tight} gives a matching primal-gradient result under controlled cross derivatives. A construction-specific upper bound is recorded separately in Appendix~\ref{sec:matching}.

\section{The Dual Probability Zero-Chain}\label{sec:construction}

We prove Theorem~\ref{thm:lower} in four steps. First, choose a nonconvex chain for which an undiscovered last coordinate prevents stationarity. Second, express its links through maximization over dual variables while keeping the primal part separable. Third, conceal only the next dual link with a Bernoulli oracle. Finally, count the successful discoveries. The scales are
\begin{equation}\label{eq:roadmap}
\underbrace{T=\Theta(L\Delta/\eps^2)}_{\text{links to discover}},\qquad
\underbrace{c^2=\Theta(\eps^2/\kappa)}_{\text{dual signal squared}},\qquad
\underbrace{p^{-1}=\Theta(1+\sigma^2/c^2)}_{\text{calls per link}}.
\end{equation}
Their product is the lower-bound scale. The next subsections justify each step; scalar derivative estimates are deferred to the appendix.

\subsection{Step 1: A chain that certifies nonstationarity}

Following \citet{carmon2020lower,arjevani2023lower}, define
\begin{equation}\label{eq:scalar}
\psi(t)=\begin{cases}
0,&t\le1/2,\\
\exp\big(1-(2t-1)^{-2}\big),&t>1/2,
\end{cases}
\qquad
\varphi(t)=\sqrt e\int_{-\infty}^t e^{-s^2/2}\,\mathrm ds.
\end{equation}
Let $\varphi_0=\varphi(0)=\sqrt{\pi e/2}$ and $\bphi(t)=\varphi(t)-\varphi_0$. The function $\bphi$ is odd and vanishes at zero. For $u\in\R^T$, set
\begin{equation}\label{eq:chain}
F_T(u)=-\varphi(u_1)+\sum_{i=1}^{T-1}
\big[\psi(-u_i)\varphi(-u_{i+1})-\psi(u_i)\varphi(u_{i+1})\big].
\end{equation}
Here $\psi(1)=1$, so the initial term agrees with the usual definition of this chain.

\Needspace{10\baselineskip}
\begin{lemma}[Basic properties]\label{lem:basic}
The functions $\psi$ and $\varphi$ are infinitely differentiable. They satisfy
\begin{align*}
&\psi(t)=\psi'(t)=0\quad(t\le1/2),\qquad
0\le\psi(t)<e,\quad 0\le\psi'(t)\le\sqrt{54/e},\quad |\psi''(t)|\le32.5,\\
&0<\varphi(t)<\sqrt{2\pi e},\qquad
0<\varphi'(t)\le\sqrt e,\qquad |\varphi''(t)|\le1.
\end{align*}
Moreover, $F_T(0)-\inf_uF_T(u)\le12T$, and
\begin{equation}\label{eq:chain-gradient}
|u_T|<1\quad\Longrightarrow\quad\norm{\nabla F_T(u)}>1.
\end{equation}
\end{lemma}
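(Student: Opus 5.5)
This lemma is essentially Lemmas~1--2 of \citet{carmon2020lower} (see also \citet{arjevani2023lower}). I would reprove each claim directly from the closed forms in~\eqref{eq:scalar}, in three parts: scalar bounds, then the gap bound, then the gradient bound.

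\textbf{Scalar bounds.} Smoothness of $\varphi$ is immediate. Smoothness of $\psi$ is the standard fact that $\exp(-s^{-2})$ extends smoothly by zero at $s=0$, applied with $s=2t-1$. For $\psi$ I would write $\psi(t)=e\cdot\exp(-s^{-2})$, so $\psi<e$. Differentiating gives $\psi'(t)=4e\,s^{-3}\exp(-s^{-2})\ge0$. Maximizing $s\mapsto s^{-3}e^{-s^{-2}}$ (maximum at $s^{-2}=3/2$) gives $\psi'\le 4e(3/2)^{3/2}e^{-3/2}=\sqrt{54/e}$. The bound on $\psi''$ comes from the same one-variable maximization applied to $\psi''=e\,e^{-s^{-2}}(24s^{-6}-24s^{-4})$, up to the chain-rule factor $4$; this is routine calculus. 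For $\varphi$: positivity and $\varphi<\sqrt e\sqrt{2\pi}$ come from the Gaussian integral. Then $\varphi'=\sqrt e\,e^{-t^2/2}\le\sqrt e$, and $|\varphi''|=\sqrt e\,|t|e^{-t^2/2}\le \sqrt e\,e^{-1/2}=1$.

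\textbf{Gap bound.} At $u=0$ every $\psi(\pm0)$ vanishes, so $F_T(0)=-\varphi(0)=-\varphi_0$. For the infimum, the first term is at least $-\sqrt{2\pi e}$. Each summand is at least $-\psi(u_i)\varphi(u_{i+1})>-e\sqrt{2\pi e}$, because $\psi(-u_i)\varphi(-u_{i+1})\ge0$. Hence
\[
F_T(0)-\inf F_T\le \sqrt{2\pi e}-\varphi_0+(T-1)e\sqrt{2\pi e}.
\]
Since $e\sqrt{2\pi e}\approx11.2<12$ and $\sqrt{2\pi e}-\varphi_0=\varphi_0\approx2.07$, the total is at most $12T$ for $T\ge1$.

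\textbf{Gradient bound.} This is the main step, and I would follow Carmon et al.'s argument. Suppose $|u_T|<1$. Let $j\le T$ be the smallest index with $|u_j|<1$. If $j=1$, the partial derivative in $u_1$ is
\[
\partial_1F_T=-\varphi'(u_1)-\psi'(-u_1)\varphi(-u_2)-\psi'(u_1)\varphi(u_2).
\]
All three terms are nonpositive. On $|u_1|\le1/2$ we have $\psi'(\pm u_1)=0$ and $\varphi'(u_1)\ge\sqrt e\,e^{-1/8}>1$. If instead $1/2<|u_1|<1$, then one of the $\psi'$ terms is positive, and I need a lower bound on $\varphi'(u_1)+\psi'(|u_1|)\varphi(\cdot)$. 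For this I would use $\varphi(-u_2)$ or $\varphi(u_2)$, which may be near $0$. The way to close this is the key inequality from Carmon et al.: $\varphi'(t)\ge$ a constant exceeding $1$ on $|t|<1$, namely $\sqrt e\,e^{-1/2}=1$ with strict inequality. So $|\partial_1F_T|>1$ holds simply because $|u_1|<1$ gives $\varphi'(u_1)>1$ and the other terms have the same sign.

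If $j>1$, then $|u_{j-1}|\ge1$, so $\psi(|u_{j-1}|)\ge\psi(1)=1$, and exactly one of $\psi(\pm u_{j-1})$ is active. The $u_j$-derivative then contains $-\psi(\pm u_{j-1})\varphi'(\pm u_j)$, which has magnitude greater than $1\cdot1$. The remaining terms $-\psi'(\pm u_j)\varphi(\pm u_{j+1})$ carry the same sign, so they only increase the magnitude. Then $\|\nabla F_T\|\ge|\partial_jF_T|>1$.

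The main obstacle is the sign bookkeeping in this last case. One has to check that when $u_{j-1}\le-1$ the active term is $\psi(-u_{j-1})\varphi'(-u_j)$, with derivative $-\psi(-u_{j-1})\varphi'(-u_j)$. The terms $-\psi'(-u_j)\varphi(-u_{j+1})$ and $-\psi'(u_j)\varphi(u_{j+1})$ then share the nonpositive sign, and the case $u_{j-1}\ge1$ is symmetric.
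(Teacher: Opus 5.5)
Your argument follows the paper's proof essentially step for step: the same closed-form scalar estimates, the same gap bound, and the same gradient argument. For the gap, your bookkeeping $\varphi_0+(T-1)e\sqrt{2\pi e}\le12T$ is a slightly sharper form of the paper's $F_T>-12T$ together with $F_T(0)<0$. For the gradient, you take the first index $j$ with $|u_j|<1$ and use that every contribution to $\partial_jF_T$ is nonpositive. Your detour in the $j=1$ case, about $\varphi(\pm u_2)$ possibly being near $0$, is unnecessary. As you then note, $\varphi'(u_1)>1$ on $|u_1|<1$ together with sign agreement already gives $|\partial_1F_T|>1$.

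The one step that fails as written is the bound $|\psi''|\le32.5$. Your formula $\psi''=e\,e^{-s^{-2}}(24s^{-6}-24s^{-4})$, stated ``up to the chain-rule factor $4$'', is incorrect. Since $\frac{d^2}{ds^2}e^{-s^{-2}}=(4s^{-6}-6s^{-4})e^{-s^{-2}}$, the correct expression with the factor $4$ already included is $\psi''(t)=e\,e^{-s^{-2}}(16s^{-6}-24s^{-4})=8\psi(t)(2-3s^2)/s^6$.

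This is not cosmetic, because the constant $32.5$ is nearly sharp. Maximizing your expression with $r=s^{-2}$ gives a peak of about $60$ at $r=2+\sqrt2$, and about $240$ if the extra factor $4$ is then applied. So $32.5$ cannot come out of it, whereas the true maximum is about $32.39$. The calculus you call routine must therefore be done exactly, as the paper does. With $r=s^{-2}$, the stationary points of $(2r^3-3r^2)e^{-r}$ on $(0,\infty)$ are $r_\pm=(9\pm\sqrt{33})/4$. Evaluating $8e\,e^{-r}|2r^3-3r^2|$ there gives about $32.39$ and $8.76$. Together with the vanishing limits as $r\to0$ and $r\to\infty$, this yields $|\psi''|<32.5$.
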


For completeness, Appendix~\ref{app:basic} proves the properties needed here. Condition~\eqref{eq:chain-gradient} ensures that an algorithm must discover the final coordinate before it can produce a sufficiently small primal gradient.

\subsection{Step 2: Realizing each link through the dual variables}

Given the parameters in Theorem~\ref{thm:lower}, define
\begin{equation}\label{eq:parameters}
L_0=\frac LC,\quad \lambda=\frac{2\eps}{L_0},\quad
c=\lambda\sqrt{\mu L_0},\quad R=\frac{5c}{\mu},\quad
T=\left\lfloor\frac{L_0\Delta}{48\eps^2}\right\rfloor.
\end{equation}
The conditions in~\eqref{eq:regime} imply $\mu\le L_0$ and $T\ge2$. We take $x\in\R^T$ and $y=[y^+;y^-]\in\Y=[-R,R]^{2(T-1)}$. Each pair $(y_i^+,y_i^-)$ connects primal coordinates $i$ and $i+1$.

For $u=x/\lambda$, define the vector $q(u)=[q^+(u);q^-(u)]$ by
\begin{equation}\label{eq:q}
q_i^+(u)=\psi(u_i)-\bphi(u_{i+1}),\qquad
q_i^-(u)=\psi(-u_i)+\bphi(-u_{i+1}),\qquad i\in[T-1].
\end{equation}
Let
\begin{align}\label{eq:H}
H(u)&=-\varphi(u_1)-\sum_{i=1}^{T-1}S_i(u),\\
S_i(u)&=\frac12\psi(u_i)^2+\frac12\psi(-u_i)^2
+\varphi_0\big(\psi(u_i)-\psi(-u_i)\big)+\bphi(u_{i+1})^2.\notag
\end{align}
The objective is
\begin{equation}\label{eq:f}
f(x,y)=L_0\lambda^2H(x/\lambda)+c\ip{q(x/\lambda)}{y}
-\frac\mu2\norm y^2.
\end{equation}
The important feature of $H$ is that it is separable in all primal coordinates. Its gradient cannot propagate information from one primal coordinate to the next.

\begin{lemma}[Primal function]\label{lem:primal}
For every $x\in\R^T$, the maximizer is $y^*(x)=(c/\mu)q(x/\lambda)$ and lies in the interior of $\Y$. Furthermore,
\begin{equation}\label{eq:primal}
\Phi(x)=L_0\lambda^2F_T(x/\lambda).
\end{equation}
\end{lemma}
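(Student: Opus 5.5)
The plan is to compute the inner maximization in closed form. For fixed $x$, write $u=x/\lambda$. Then $f(x,\cdot)$ is the concave quadratic
\[
y\mapsto L_0\lambda^2H(u)+c\ip{q(u)}{y}-\frac\mu2\norm y^2 .
\]
Its unconstrained maximizer over $\R^{2(T-1)}$ is $\tilde y=(c/\mu)q(u)$, obtained by setting the gradient $cq(u)-\mu y$ to zero. If $\tilde y$ lies in the interior of the box $\Y=[-R,R]^{2(T-1)}$, then it is also the constrained maximizer, since the constrained maximum cannot exceed the unconstrained one. Uniqueness then follows from strong concavity, so $y^*(x)=\tilde y$. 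The first step is therefore a coordinatewise bound $|q_i^\pm(u)|<5$, which gives $|\tilde y_i^\pm|<5c/\mu=R$.

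For the bound on $q$, I will use Lemma~\ref{lem:basic}. It gives $0\le\psi<e$. It also gives $0<\varphi<\sqrt{2\pi e}=2\varphi_0$, so $|\bphi(t)|=|\varphi(t)-\varphi_0|<\varphi_0=\sqrt{\pi e/2}<2.07$. Hence $|q_i^\pm(u)|<e+2.07<4.8<5$. All inequalities are strict, so $\tilde y$ lies strictly inside the box for every $x$.

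Next, substitute $\tilde y$ into $f$ to get
\[
\Phi(x)=L_0\lambda^2H(u)+\frac{c^2}{2\mu}\norm{q(u)}^2 .
\]
By~\eqref{eq:parameters}, $c^2=\lambda^2\mu L_0$, so $c^2/(2\mu)=L_0\lambda^2/2$. This gives $\Phi(x)=L_0\lambda^2\big[H(u)+\tfrac12\norm{q(u)}^2\big]$, and it remains to show the identity $H+\tfrac12\norm q^2=F_T$ link by link.

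Expand $\tfrac12(q_i^+)^2+\tfrac12(q_i^-)^2$. The pure squares are $\tfrac12\psi(u_i)^2+\tfrac12\psi(-u_i)^2$, together with $\tfrac12\bphi(u_{i+1})^2+\tfrac12\bphi(-u_{i+1})^2=\bphi(u_{i+1})^2$, where the last equality uses that $\bphi$ is odd. The cross terms are $-\psi(u_i)\bphi(u_{i+1})+\psi(-u_i)\bphi(-u_{i+1})$. Subtracting $S_i(u)$ cancels all the squares and leaves
\[
-\psi(u_i)\big(\bphi(u_{i+1})+\varphi_0\big)+\psi(-u_i)\big(\bphi(-u_{i+1})+\varphi_0\big)
=\psi(-u_i)\varphi(-u_{i+1})-\psi(u_i)\varphi(u_{i+1}).
\]
This is exactly the $i$-th summand of~\eqref{eq:chain}. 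Adding the leading term $-\varphi(u_1)$ gives $H+\tfrac12\norm q^2=F_T$, which proves~\eqref{eq:primal}.

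There is no real obstacle in this argument. The only point needing care is that the interiority bound must be strict and uniform in $x$. This is what the choice $R=5c/\mu$ is designed for, and it relies on the strict bounds $\psi<e$ and $|\bphi|<\varphi_0$.
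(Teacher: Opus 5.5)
Your proposal is correct and follows essentially the same route as the paper: you take the unconstrained maximizer $(c/\mu)q(u)$, place it strictly inside the box via $|q_i^\pm|<e+\varphi_0<4.8<5$, and substitute it using $c^2=\mu L_0\lambda^2$. You then verify $H+\tfrac12\norm{q}^2=F_T$ link by link by expanding the squares and using oddness of $\bphi$. Your expansion and constants check out exactly.
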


The identity behind Lemma~\ref{lem:primal} is
\begin{equation}\label{eq:cancellation}
H(u)+\frac12\norm{q(u)}^2=F_T(u).
\end{equation}
Since $c^2/\mu=L_0\lambda^2$, maximizing over the dual variables adds precisely the square term in~\eqref{eq:cancellation}. The expansion recovers the links in $F_T$, while the separable terms cancel. Equivalently,
\begin{equation}\label{eq:completed-square}
f(x,y)=L_0\lambda^2F_T(x/\lambda)
-\frac\mu2\norm{y-\frac c\mu q(x/\lambda)}^2.
\end{equation}

\begin{lemma}[Membership in the problem class]\label{lem:class}
The function in~\eqref{eq:f}, with the parameters in~\eqref{eq:parameters}, belongs to $\F(L,\mu,\Delta)$. It also belongs to $\mathcal M(L,\mu,1.14\Delta)$ and satisfies
\begin{equation}\label{eq:instance-cross}
\norm{\nabla^2_{xy}f(x,y)}\le8.8\sqrt{\mu L_0}<\sqrt{\mu L}
\qquad\text{for all }(x,y)\in\R^T\times\Y.
\end{equation}
\end{lemma}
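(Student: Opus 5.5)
We verify Lemma~\ref{lem:class} item by item, using the completed-square form~\eqref{eq:completed-square}, Lemma~\ref{lem:primal}, and the scalar bounds in Lemma~\ref{lem:basic}. Assumption~\ref{asm:domain} and condition~(i) are immediate, since $\Y=[-R,R]^{2(T-1)}$ is a bounded box containing the origin. Condition~(ii) holds because $\psi$ and $\varphi$ are $C^\infty$. Assumption~\ref{asm:concave} holds because $\nabla_{yy}^2f=-\mu I$ exactly.

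\textbf{Primal and primal-dual gaps.} By Lemma~\ref{lem:primal}, $\Phi(0)-\inf\Phi=L_0\lambda^2(F_T(0)-\inf F_T)\le12TL_0\lambda^2$. Since $L_0\lambda^2=4\eps^2/L_0$ and $T\le L_0\Delta/(48\eps^2)$, this is at most $\Delta$, which gives Assumption~\ref{asm:gap}. For $G_0$, write $f(x,0)=L_0\lambda^2H(x/\lambda)$, which is bounded below for each fixed $y$ because $\psi,\varphi$ are bounded. Then $G_0=L_0\lambda^2[F_T(0)-\inf_u H(u)]$. I will bound $F_T(0)-\inf H$ coordinatewise. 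We have $F_T(0)=-\varphi_0$, and each $S_i$ is a separable function with $-S_i\ge-\tfrac12\psi^2-\tfrac12\psi^2-\varphi_0\psi-\bphi^2$. Since $\psi$ is supported on one side, $\sup S_i\le \tfrac12e^2+\varphi_0e+\varphi_0^2$. This constant is roughly $13.7$, which compared with $12T$ yields the factor $1.14$; I expect the intended bound to be $\sup_i S_i\le 13.68$. Together with the $\varphi(u_1)$ term, which is handled by absorbing $\sqrt{2\pi e}-\varphi_0$ using $T\ge2$, this gives $G_0\le1.14\Delta$. Pinning down the constant requires care: one must use that $\psi(t)$ and $\psi(-t)$ never overlap, and that $|\bphi|\le\varphi_0$.

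\textbf{Cross derivative.} We have $\nabla_{xy}^2 f=(c/\lambda)\,Dq(u)^\top$. The Jacobian $Dq$ has rows with at most two nonzero entries: $\psi'(\pm u_i)$, bounded by $\sqrt{54/e}\approx4.46$, and $\varphi'$, bounded by $\sqrt e$. Each column of $Dq$ touches at most four rows. Bounding the spectral norm by $\sqrt{\|Dq\|_1\|Dq\|_\infty}$, or more sharply using the fact that $\psi'(u_i)\psi'(-u_i)=0$, gives $\|Dq\|\le 8.8$. Since $c/\lambda=\sqrt{\mu L_0}$, this yields~\eqref{eq:instance-cross}, and $8.8\sqrt{\mu L_0}<\sqrt{\mu L}$ because $C=370>8.8^2$.

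\textbf{Joint smoothness (main obstacle).} We bound the full Hessian blockwise. First, $\|\nabla_{xx}^2f\|\le L_0\,\|\nabla^2H\|_{\rm diag}+(c/\lambda^2)\|y\|_\infty\sum|q''|$-type terms. Here $H$ is diagonal with entries bounded by absolute constants from Lemma~\ref{lem:basic}, using $|\psi\psi''|,|\psi'^2|,|\psi''|$ and $|\bphi\bphi''|+\bphi'^2$. For the $y$-dependent part, $|y_j|\le R=5c/\mu$ gives $(c/\lambda^2)R=5c^2/(\mu\lambda^2)=5L_0$. This matters: because the box radius is tied to $c/\mu$, the Hessian contribution is $O(L_0)$ uniformly, which is precisely why the dual box must be bounded. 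Each primal coordinate meets at most two pairs $(y_i^\pm)$ as $u_i$ and two as $u_{i+1}$, so the diagonal entries are bounded by $L_0$ times an absolute constant. Second, the cross block is bounded by $8.8\sqrt{\mu L_0}\le8.8L_0$, and the $yy$ block by $\mu\le L_0$. Summing the block norms gives $\|\nabla^2 f\|\le KL_0$ with explicit $K\le C=370$, hence $L$-smoothness on the open neighborhood. The bookkeeping of these constants is the step I expect to be tedious, and I would defer it to scalar estimates in the appendix. Membership in $\F(L,\mu,\Delta)$ then follows, and the interior maximizer from Lemma~\ref{lem:primal} confirms the setup is consistent.
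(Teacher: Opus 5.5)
Your outline follows the paper's proof closely: the blockwise Hessian, the choice $R=5c/\mu$ that gives $cR/\lambda^2=5L_0$, the Schur-type bound $\norm{B}\le\sqrt{\norm{B}_1\norm{B}_\infty}$ for the cross block, and the one-sided bound on $S_i$ for $G_0$. The gap is in the joint-smoothness step, which is the real content of the lemma. You assert that summing the block norms gives $K\le C=370$, but you never carry out the computation.

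Because $C=370$ is fixed and the paper's estimate is $368.3L_0$, the margin is under $2L_0$, so the details you leave out decide the outcome. The paper needs three things.
\begin{enumerate}
\item[(a)] The bound $|\partial_k h_k|\le 1+54/e+32.5e+32.5\varphi_0+2(e+\varphi_0)<187$, which uses that only one of $\psi(\pm u_k)$ is active.
\item[(b)] The bound $5L_0(32.5+2)=172.5L_0$ on the $y$-dependent part of $A_{kk}$. This needs the correct incidence count: coordinate $k$ meets only the pair $y_k^\pm$, through $\psi''(\pm u_k)$ with at most one term nonzero, and the pair $y_{k-1}^\pm$, through $\varphi''(\pm u_k)$. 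Your count of ``two pairs as $u_i$ and two as $u_{i+1}$'' overcounts. Dropping one-sidedness alone already turns $172.5L_0$ into $335L_0$.
\item[(c)] The combination
\[
\norm{\nabla^2f}\le\max\{\norm A,\mu\}+\norm B\le(359.5+8.8)L_0<370L_0,
\]
obtained by splitting the Hessian into its block-diagonal and off-diagonal parts. If ``summing the block norms'' means adding the norms of all four blocks, counting $B$ and $B^\top$ separately, the same estimates give about $377L_0>L$.
\end{enumerate}
Also, the Hessian bound uses $|y_j|\le R$, so it holds on $\R^T\times\Y$, not on an open neighborhood. Lipschitz continuity of $\nabla f$ then follows by integrating along segments in this convex set, which is all Assumption~\ref{asm:smooth} requires.

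There is a smaller numerical slip in the $G_0$ step. In fact $\tfrac12e^2+e\varphi_0+\varphi_0^2\approx13.58<13.6$, not roughly $13.7$; with $13.7$ the ratio $13.7/12$ would exceed $1.14$, so your own estimate would not close the argument. The leftover term is $\sqrt{2\pi e}-\varphi_0=\varphi_0<13.6$. Hence $G_0\le L_0\lambda^2(\varphi_0+13.6(T-1))\le13.6L_0\lambda^2T\le(13.6/12)\Delta<1.14\Delta$, and no appeal to $T\ge2$ is needed.
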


The proofs of Lemmas~\ref{lem:primal} and~\ref{lem:class} are given in Appendix~\ref{app:objective}. Strong concavity is immediate from $\nabla^2_{yy}f=-\mu I$. Joint smoothness requires more care because the primal Hessian contains terms proportional to $cy_i^\pm\psi''(x_i/\lambda)/\lambda^2$. The box constraint and the choice of $c$ give $cR/\lambda^2=5L_0$, so these terms are uniformly bounded. The resulting estimate is $\norm{\nabla^2f}<370L_0=L$ throughout $\R^T\times\Y$.

\subsection{Step 3: Concealing the next dual link}

Define the largest nonzero primal coordinate by
\[
j(x)=\max\{i\in[T]:x_i\ne0\},\qquad j(0)=0.
\]
For $u=x/\lambda$, let
\begin{equation}\label{eq:v}
v(x)=\begin{cases}
c\big(\psi(u_j)e_{y_j^+}+\psi(-u_j)e_{y_j^-}\big),&1\le j=j(x)\le T-1,\\
0,&j(x)\in\{0,T\}.
\end{cases}
\end{equation}
Here $e_{y_j^\pm}$ denotes the corresponding basis vector in the dual space. Choose
\begin{equation}\label{eq:p}
p=\frac{e^2c^2}{e^2c^2+\sigma^2}
=\left(1+\frac{\kappa\sigma^2}{4e^2C\eps^2}\right)^{-1},
\end{equation}
and define
\begin{equation}\label{eq:oracle}
\Ocal(x,y;\xi)=
\begin{bmatrix}
\nabla_x f(x,y)\\
\nabla_y f(x,y)+(\xi/p-1)v(x)
\end{bmatrix},\qquad \xi\sim\operatorname{Bernoulli}(p).
\end{equation}
Thus only part of the dual gradient at the current frontier is randomized. The primal gradient is returned exactly.

\begin{lemma}[Oracle validity]\label{lem:oracle}
The oracle in~\eqref{eq:oracle} satisfies Definition~\ref{def:oracle-model} for every $\sigma\ge0$.
\end{lemma}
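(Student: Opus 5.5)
We need to verify unbiasedness and the variance bound for the oracle in~\eqref{eq:oracle}. The proof is a direct computation, since the only random part is the scalar factor $(\xi/p-1)$ multiplying the deterministic vector $v(x)$.

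\textbf{Unbiasedness.} Since $\xi\sim\operatorname{Bernoulli}(p)$ with $p\in(0,1]$, we have $\E[\xi/p-1]=0$. The primal block is exact, and the dual block equals $\nabla_y f(x,y)$ plus a mean-zero multiple of $v(x)$. Hence $\E_\xi[\Ocal(x,y;\xi)]=\nabla f(x,y)$ at every feasible query.

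\textbf{Variance.} The deviation from $\nabla f(x,y)$ is $(\xi/p-1)v(x)$, so
\[
\E_\xi\norm{\Ocal(x,y;\xi)-\nabla f(x,y)}^2=\E(\xi/p-1)^2\,\norm{v(x)}^2=\frac{1-p}{p}\norm{v(x)}^2 .
\]
We bound $\norm{v(x)}^2$ using Lemma~\ref{lem:basic}. If $j(x)\in\{0,T\}$, then $v=0$. Otherwise $\norm{v}^2=c^2(\psi(u_j)^2+\psi(-u_j)^2)$. At most one of $\psi(u_j),\psi(-u_j)$ is nonzero, since $\psi$ vanishes on $(-\infty,1/2]$, and each is below $e$. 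Therefore $\norm{v(x)}^2< e^2c^2$. With $p=e^2c^2/(e^2c^2+\sigma^2)$ we get $(1-p)/p=\sigma^2/(e^2c^2)$, so the variance is at most $\sigma^2$.

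\textbf{Case $\sigma=0$.} Then $p=1$, so $\xi\equiv1$ and the oracle is exact. The claim holds trivially.

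Finally, we confirm the closed form of $p$. From~\eqref{eq:parameters}, $c^2=\lambda^2\mu L_0=4\eps^2\mu/L_0=4C\eps^2/\kappa$. Hence $\sigma^2/(e^2c^2)=\kappa\sigma^2/(4e^2C\eps^2)$, which matches~\eqref{eq:p}.

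There is no real obstacle here. The only point needing care is the uniform bound $\psi<e$ together with the disjoint supports of $\psi(u_j)$ and $\psi(-u_j)$, which gives $\norm{v}\le ec$ rather than $\sqrt2\,ec$. Even the cruder bound would only change constants.
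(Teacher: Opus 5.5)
Your proposal is correct and follows the paper's proof essentially step for step. You use unbiasedness from $\E(\xi/p-1)=0$, the identity $\E(\xi/p-1)^2=(1-p)/p$, and the bound $\norm{v(x)}\le ec$ from the disjoint supports of $\psi(\pm u_j)$, with $p=1$ giving an exact oracle when $\sigma=0$. Your extra check of the closed form of $p$ via $c^2=4C\eps^2/\kappa$ is a harmless addition.
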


\begin{proof}
The identity $\E(\xi/p-1)=0$ gives unbiasedness. At most one of $\psi(u_j)$ and $\psi(-u_j)$ is nonzero, so $\norm{v(x)}\le ec$, including the endpoint cases in~\eqref{eq:v}. Therefore,
\[
\E\norm{\Ocal(x,y;\xi)-\nabla f(x,y)}^2
=\frac{1-p}{p}\norm{v(x)}^2
=\frac{\sigma^2}{e^2c^2}\norm{v(x)}^2\le\sigma^2.
\]
When $\sigma=0$, we have $p=1$ and the oracle is deterministic.
\end{proof}

The scale of the randomized component is
\[
c=2\eps\sqrt{\frac{C}{\kappa}}.
\]
By contrast, Lemmas~\ref{lem:basic} and~\ref{lem:primal} give $\norm{\nabla\Phi(x)}>2\eps$ whenever $x_T=0$. This separation between the dual information and the primal stationarity measure produces the factor $\kappa$ in the stochastic lower bound.

\subsection{Step 4: Counting discoveries}

Let
\[
I_x^t=\bigcup_{s=0}^{t-1}\supp(g_x^s),\qquad
I_y^t=\bigcup_{s=0}^{t-1}\supp(g_y^s),\qquad
P_t=\max I_x^t,
\]
with $P_t=0$ when $I_x^t$ is empty. A dual link $k$ is discovered at time $t$ if either of its two coordinates belongs to $I_y^t$.

\begin{lemma}[Dual probability zero-chain]\label{lem:progress}
For every zero-respecting algorithm interacting with~\eqref{eq:oracle},
\begin{equation}\label{eq:progress}
P_N\le1+\sum_{t=0}^{N-1}\xi^t\qquad\text{for every }N\ge1.
\end{equation}
\end{lemma}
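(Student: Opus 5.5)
We prove a stronger invariant by induction on $t$, tracking the primal and dual discovered sets at the same time. Let $D_t=\max\{k:\ y_k^+\in I_y^t\text{ or }y_k^-\in I_y^t\}$, with $D_t=0$ if no dual link has been discovered. We claim that for every $t\ge0$,
\[
P_t\le D_t+1,\qquad D_t\le P_t,\qquad D_t\le\sum_{s=0}^{t-1}\xi^s .
\]
Equivalently, the primal frontier is at most one beyond the dual frontier, and the dual frontier advances only on a successful Bernoulli trial. For the base case $t=0$, everything is empty. Under the intended convention, the first query at the origin reveals coordinate $1$ through $-\varphi'(0)e_1$, which is consistent with $P\le 1$. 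The claimed bound \eqref{eq:progress} then follows from $P_N\le D_N+1\le 1+\sum_t\xi^t$.

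For the inductive step, take a query $(x^t,y^t)$ at time $t$. By Definition~\ref{def:zero}, $\supp(x^t)\subseteq I_x^t\subseteq[P_t]$ and $\supp(y^t)\subseteq I_y^t$, which is contained in the coordinates of links $1,\dots,D_t$. Let $j=j(x^t)\le P_t$ and $u=x^t/\lambda$. We read the supports off the explicit gradients of \eqref{eq:f}.

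\emph{Primal gradient.} $H$ is separable, so $\nabla H(u)$ is supported in $\supp(u)\cup\{1\}$. The coupling term $c\nabla_x\ip{q(u)}{y}$ has $i$th component $c\bigl[(\psi'(u_i)y_i^+-\psi'(-u_i)y_i^-)+(-\bphi'(u_i)y_{i-1}^+ -\bphi'(-u_i)y_{i-1}^-)\bigr]$. The first bracket vanishes unless $u_i\neq0$. The second vanishes unless link $i-1$ is dual-discovered, that is, $i\le D_t+1$. Hence $\supp(g_x^t)\subseteq[\max(P_t,D_t+1,1)]$, and the inductive hypothesis $D_t\le P_t$ gives $P_{t+1}\le \max(P_t,D_t+1)$.

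\emph{Dual gradient.} We have $\nabla_{y_k^\pm}f=c\,q_k^\pm(u)-\mu y_k^\pm$. Since $\bphi(0)=0$, we get $q_k^+(u)=\psi(u_k)-\bphi(u_{k+1})$, which is nonzero only if $u_k\ne0$ or $u_{k+1}\ne0$. This forces $k\le j$. For $k\le j-1$ the link is already within $P_t$. The critical case is $k=j$: here $\bphi(u_{j+1})=0$, so $cq_j^\pm(u)=c\psi(\pm u_j)$, which is exactly the component $v(x^t)$. The oracle multiplies this component by $\xi^t/p$, so it is zero whenever $\xi^t=0$. The term $\mu y_k^\pm$ only touches already-discovered links.

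Combining these, the new dual support satisfies $D_{t+1}\le\max(D_t,\,j-1+\mathbb 1[\xi^t=1])$ with $j\le P_t$. Moreover, a newly discovered link $k$ always has $k\le P_t\le P_{t+1}$.

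The main obstacle is closing the induction so that $D$ stays tied to the number of successes. The issue is that $j-1$ could exceed $D_t$ when $P_t=D_t+1$, and then the case bound alone does not control $D_{t+1}$. We handle this with the pair of invariants. From $P_t\le D_t+1$ we get $j-1\le D_t$, so $D_{t+1}\le D_t+\xi^t$. From the primal step, $P_{t+1}\le D_t+1\le D_{t+1}+1$. Finally, $D_{t+1}\le P_{t+1}$ holds because any discovered link lies at or below a discovered primal coordinate. Care is needed with the endpoints $j\in\{0,T\}$, where $v=0$ and $q_T$ does not exist, and with the fact that the frontier $j(x^t)$ may lie strictly below $P_t$ if the algorithm resets coordinates. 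That case only lowers $j$, and the bounds above are monotone in $j$.
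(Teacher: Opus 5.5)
Your proof is correct. It uses the same support facts as the paper: separability of $H$, the identity $\bphi(0)=0$ killing any link beyond $j(x^t)$, and Bernoulli gating of $c\psi(\pm u_j)$ at the frontier. The bookkeeping of the induction is organized differently.

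\textbf{The paper's route.} It keeps only the invariant $D_t\le P_t$ (every discovered link has index at most $P_t$). It then splits on whether link $P_t$ is already discovered. Finally, it argues by an injection that each advance $P\to P+1$ consumes a distinct earlier success, with a separate remark for the phase after $P_t=T$.

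\textbf{Your route.} You track the dual frontier $D_t$ explicitly and prove $P_t\le D_t+1$ together with the one-step bound $D_{t+1}\le D_t+\xi^t$. This telescopes directly to~\eqref{eq:progress}. The endpoint $j=T$, the post-$T$ phase, and coordinate resets need no special handling. This is a slightly cleaner potential-function version of the same mechanism.

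\textbf{A minor redundancy.} Your closing step never uses $D_t\le P_t$. The bound $P_{t+1}\le\max(P_t,D_t+1)$ holds directly from the gradient formula, so attributing it to that hypothesis is unnecessary. Only $P_t\le D_t+1$ is needed, both to get $j-1\le D_t$ and to get $P_{t+1}\le D_t+1\le D_{t+1}+1$. The third invariant is true but can be dropped.
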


The dependency order is
\[
x_1\ \longrightarrow\ y_1^\pm\ \longrightarrow\ x_2\
\longrightarrow\ \cdots\ \longrightarrow\ y_{T-1}^\pm\
\longrightarrow\ x_T.
\]
The first primal coordinate is revealed at the origin. For subsequent calls, the key invariant is that every discovered dual link has index at most $P_t$. At a frontier $P=P_t<T$, there are two cases:
\begin{enumerate}[label=(\roman*)]
\item If dual link $P$ is undiscovered, then its coordinates must be zero in the query. No new primal coordinate can appear, and link $P$ can be revealed only when $\xi^t=1$.
\item If dual link $P$ is already discovered, then the next primal coordinate $P+1$ may appear. However, dual link $P+1$ cannot appear on the same call, because both $x_{P+1}^t$ and all later primal coordinates are still zero in that query.
\end{enumerate}
Consequently, each new primal coordinate after the first requires a distinct earlier Bernoulli success. This gives~\eqref{eq:progress}. Appendix~\ref{app:progress} verifies the invariant from the gradient formulas, including for adaptive queries that reset previously discovered coordinates to zero.

\subsection{Proof of Theorem~\ref{thm:lower}}

\begin{proof}
Lemmas~\ref{lem:class} and~\ref{lem:oracle} verify the objective and oracle assumptions. At zero queries, $\hx^0=0$ and $\norm{\nabla\Phi(0)}=2\sqrt e\,\eps>\eps$.

Now suppose $1\le N\le(T-1)/(2p)$. By Lemma~\ref{lem:progress} and Markov's inequality,
\[
\Prob(P_N\ge T)
\le\Prob\left(\sum_{t=0}^{N-1}\xi^t\ge T-1\right)
\le\frac{Np}{T-1}\le\frac12.
\]
The zero-respecting property implies $\hx_T^N=0$ on the event $P_N<T$. On this event, Lemmas~\ref{lem:basic} and~\ref{lem:primal} imply
\[
\norm{\nabla\Phi(\hx^N)}
=L_0\lambda\norm{\nabla F_T(\hx^N/\lambda)}>2\eps.
\]
Consequently, $\E\norm{\nabla\Phi(\hx^N)}>\eps$.

Finally, $a=L_0\Delta/(48\eps^2)\ge2$ gives $T=\lfloor a\rfloor\ge a/2$ and $T-1\ge T/2$. Hence $T-1\ge L_0\Delta/(192\eps^2)$. Substituting~\eqref{eq:p} and $L_0=L/C$ yields
\[
\frac{T-1}{2p}\ge
c_0\left(\frac{L\Delta}{\eps^2}
+\frac{\kappa L\Delta\sigma^2}{\eps^4}\right),
\]
with $c_0=1/(1536e^2C^2)$, which proves~\eqref{eq:lower-budget}.
\end{proof}

\section{Matching the SAPD+ Guarantee}\label{sec:sapd-proof}

Theorem~\ref{thm:lower} uses a primal-gap budget and the gradient of $\Phi$. To prove Theorem~\ref{thm:sapd-tight}, we must transfer its hard instance to the primal-dual gap budget and the Moreau-envelope criterion. Lemma~\ref{lem:class} already gives $G_0\le1.14\Delta$. The remaining step is a comparison of the two gradients with a constant independent of $\kappa$.

\subsection{Comparing stationarity at the same point}

\begin{lemma}[Gradient comparison]\label{lem:comparison}
Suppose $f\in\mathcal M(L,\mu,G)$ and $\norm{\nabla^2_{xy}f}\le b$ throughout its domain. Then $\Phi$ is $(L+b^2/\mu)$-smooth. For $\eta=1/(2L)$,
\begin{equation}\label{eq:comparison-general}
\norm{\nabla\Phi(x)}\le\left(1+\eta\left(L+\frac{b^2}{\mu}\right)\right)
\norm{\nabla\Phi_\eta(x)}\qquad\text{for every }x.
\end{equation}
In particular, if $b\le\sqrt{\mu L}$, then
\begin{equation}\label{eq:comparison}
\norm{\nabla\Phi(x)}\le2\norm{\nabla\Phi_\eta(x)}.
\end{equation}
\end{lemma}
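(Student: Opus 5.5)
Two steps are needed: first show that $\Phi$ is smooth with constant $\ell:=L+b^2/\mu$, and then compare $\nabla\Phi$ with $\nabla\Phi_\eta$ at the same point through the proximal map.

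\textbf{Smoothness of $\Phi$.} By Danskin's theorem (Section~\ref{sec:prelim}), $\nabla\Phi(x)=\nabla_xf(x,y^*(x))$. Hence
\[
\norm{\nabla\Phi(x)-\nabla\Phi(x')}\le\norm{\nabla_xf(x,y^*(x))-\nabla_xf(x',y^*(x))}+\norm{\nabla_xf(x',y^*(x))-\nabla_xf(x',y^*(x'))}.
\]
Joint $L$-smoothness bounds the first term by $L\norm{x-x'}$. For the second term, integrate $\nabla^2_{xy}f$ along the dual segment; this segment stays in $\Y$ because $\Y$ is convex. The term is then at most $b\norm{y^*(x)-y^*(x')}$.

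It remains to show that $y^*$ is $(b/\mu)$-Lipschitz. Because $\Y$ is a box, $y^*(x)$ may lie on the boundary, so I would use the variational inequality rather than the implicit function theorem. Strong monotonicity of $-\nabla_yf(x,\cdot)$, combined with the optimality conditions $\ip{\nabla_yf(x,y^*(x))}{y-y^*(x)}\le0$ for all $y\in\Y$ and the analogous condition at $x'$, gives
\[
\mu\norm{y^*(x)-y^*(x')}^2\le\ip{\nabla_yf(x,y^*(x'))-\nabla_yf(x',y^*(x'))}{y^*(x)-y^*(x')}.
\]
Integrating $\nabla^2_{yx}f$ bounds the first factor of the right-hand side by $b\norm{x-x'}$, since the transpose of a matrix has the same spectral norm. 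Therefore $\norm{y^*(x)-y^*(x')}\le(b/\mu)\norm{x-x'}$, and $\nabla\Phi$ is $(L+b^2/\mu)$-Lipschitz.

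\textbf{Gradient comparison.} Let $z=\operatorname{prox}_{\eta\Phi}(x)$. This minimizer is unique because $\Phi$ is $L$-weakly convex and $1/\eta=2L>L$. Since $\Phi$ is differentiable, the first-order optimality condition gives $\nabla\Phi(z)=(x-z)/\eta=\nabla\Phi_\eta(x)$. Then
\[
\norm{\nabla\Phi(x)}\le\norm{\nabla\Phi(z)}+\ell\norm{x-z}=\norm{\nabla\Phi_\eta(x)}+\ell\eta\norm{\nabla\Phi_\eta(x)},
\]
which is~\eqref{eq:comparison-general}. If $b\le\sqrt{\mu L}$, then $\ell\le2L$ and $\eta\ell\le1$, which yields~\eqref{eq:comparison}.

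The step that needs the most care is the Lipschitz bound on $y^*$ under the constrained dual domain. The variational-inequality argument above handles it and avoids assuming that the maximizer is interior. Integrating along line segments requires $f$ to be $C^2$ on a neighborhood of $\R^{d_x}\times\Y$, which membership in $\mathcal M(L,\mu,G)$ provides.
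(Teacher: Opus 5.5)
Your proposal is correct and follows the paper's proof essentially step for step. You use the same variational-inequality bound $\mu\norm{y^*(x)-y^*(x')}^2\le\ip{\nabla_yf(x,y^*(x'))-\nabla_yf(x',y^*(x'))}{y^*(x)-y^*(x')}$, Danskin's formula for the $(L+b^2/\mu)$-smoothness of $\Phi$, and the identity $\nabla\Phi(z)=\nabla\Phi_\eta(x)$ at $z=\operatorname{prox}_{\eta\Phi}(x)$ followed by the triangle inequality; your explicit splitting of the Danskin difference and the uniqueness argument for the prox point only spell out steps the paper leaves implicit.
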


\begin{proof}
Let $y=y^*(x)$ and $y'=y^*(x')$. The optimality inequalities for the two constrained maximizations and $\mu$-strong concavity imply
\[
\mu\norm{y-y'}^2
\le\ip{\nabla_y f(x,y')-\nabla_y f(x',y')}{y-y'}
\le b\norm{x-x'}\norm{y-y'}.
\]
Thus $y^*$ is $(b/\mu)$-Lipschitz, including at the boundary of the box. Danskin's formula gives
\begin{align*}
\norm{\nabla\Phi(x)-\nabla\Phi(x')}
&\le L\norm{x-x'}+b\norm{y^*(x)-y^*(x')}\\
&\le(L+b^2/\mu)\norm{x-x'}.
\end{align*}
For $z=\operatorname{prox}_{\eta\Phi}(x)$, the first-order optimality condition is
\[
\nabla\Phi(z)=\frac{x-z}{\eta}=\nabla\Phi_\eta(x).
\]
Combining this identity with the smoothness just proved yields
\[
\norm{\nabla\Phi(x)}
\le\norm{\nabla\Phi(z)}+(L+b^2/\mu)\norm{x-z}
=\big(1+\eta(L+b^2/\mu)\big)\norm{\nabla\Phi_\eta(x)}.
\]
If $b^2\le\mu L$, the factor is at most $2$.
\end{proof}

For a general jointly smooth problem, the same calculation can lose a factor of order $\kappa$. Our instance instead satisfies~\eqref{eq:instance-cross}, so~\eqref{eq:comparison} transfers its lower bound with an absolute factor. The comparison holds at the algorithm's output; no proximal computation is added to the oracle model.

\subsection{Proof of Theorem~\ref{thm:sapd-tight}}

\begin{proof}
\textbf{Lower bound.} Apply the construction of Theorem~\ref{thm:lower} with
\[
\Delta'=G/1.14,\qquad \eps_{\rm lb}=2\eps
\]
in place of $\Delta$ and $\eps$. The parameter restriction is valid because
\[
4\cdot96\cdot1.14=437.76<440,\qquad
\eps_{\rm lb}^2\le\frac{L\Delta'}{96C}.
\]
By Lemma~\ref{lem:class}, the constructed problem lies in $\mathcal M(L,\mu,G)$ and satisfies~\eqref{eq:comparison}. Hence, for any zero-respecting algorithm, Theorem~\ref{thm:lower} gives
\[
N\le\frac{c_0}{16\cdot1.14}\frac{\kappa LG\sigma^2}{\eps^4}
\quad\Longrightarrow\quad
\E\norm{\nabla\Phi_\eta(\hx^N)}
\ge\frac12\E\norm{\nabla\Phi(\hx^N)}>\eps.
\]
This proves the lower bound for exactly the criterion in~\eqref{eq:env-criterion}.

\textbf{Upper bound.} The SAPD+ guarantee of \citet[Theorems~1 and~3]{zhang2022sapd}, specialized to $\gamma=L$, gives~\eqref{eq:env-criterion} in
\begin{equation}\label{eq:sapd-upper}
O\!\left(\kappa\max\{1,\sigma^2/\eps^2\}
\left(1+\frac{LG}{\eps^2}\right)\right)
\end{equation}
calls. Appendix~\ref{app:sapd-specialization} verifies the assumptions, the uniform-iterate output, and the zero-respecting implementation. Under~\eqref{eq:tight-regime}, $\sigma^2/\eps^2\ge1$ and $LG/\eps^2\ge440C$, so~\eqref{eq:sapd-upper} is $O(\kappa LG\sigma^2\eps^{-4})$. Combining the two bounds proves~\eqref{eq:main-tight}.
\end{proof}

\section{Conclusion}\label{sec:conclusion}

We establish a lower bound that matches the stochastic complexity of SAPD+ for zero-respecting algorithms, using the same Moreau-envelope criterion and primal-dual initialization gap. The key construction places the information needed for primal progress in a dual gradient of magnitude $\Theta(\eps/\sqrt\kappa)$. This yields a linear-$\kappa$ stochastic lower bound, even with an exact primal oracle. The same instance gives a primal-gradient lower bound under a primal-gap budget and, under controlled cross derivatives, a matching primal-gradient result.

The established tightness concerns general unbiased bounded-variance estimators and a class allowing bounded dual boxes. Extending the lower bound to arbitrary randomized algorithms, stochastic gradients of differentiable sample losses, or an unconstrained dual domain remains open for this construction. Likewise, a matching primal-gradient upper bound under only a primal-gap budget requires a separate argument.

\clearpage
\section*{AI Assistance Acknowledgement}

The author used GPT-6 Astra and Claude Science with Opus 5.5 to assist with drafting this paper. All content, including all proofs, was verified by the author, who takes full responsibility for the paper.

\begingroup
\small
\setlength{\bibsep}{3pt plus 1pt minus 1pt}
\bibliographystyle{plainnat}
\bibliography{references}
\endgroup

\clearpage
\appendix
\section{Properties of the Nonconvex Chain}\label{app:basic}

\begin{proof}[Proof of Lemma~\ref{lem:basic}]
The function $\exp(-s^{-2})$, extended by zero for $s\le0$, is infinitely differentiable and all of its derivatives vanish at zero. This proves smoothness of $\psi$ at $1/2$. For $t>1/2$, writing $s=2t-1$ gives
\[
\psi'(t)=\frac{4\psi(t)}{s^3},\qquad
\psi''(t)=\frac{8\psi(t)(2-3s^2)}{s^6}.
\]
The maximum of $4e\exp(-s^{-2})s^{-3}$ is $\sqrt{54/e}$. For the second derivative, let $r=s^{-2}$. The stationary points of $(2r^3-3r^2)e^{-r}$ on $(0,\infty)$ occur at
\[
r_-=(9-\sqrt{33})/4,\qquad r_+=(9+\sqrt{33})/4.
\]
Evaluation at these two points, together with the limits at zero and infinity, gives
\[
|\psi''(t)|\le
8e\max_{r\in\{r_-,r_+\}}e^{-r}|2r^3-3r^2|<32.5.
\]
The bounds on $\psi$ and its vanishing derivatives on $(-\infty,1/2]$ follow directly from~\eqref{eq:scalar}.

For the second scalar function,
\[
\varphi'(t)=\sqrt e\,e^{-t^2/2},\qquad
\varphi''(t)=-t\sqrt e\,e^{-t^2/2}.
\]
The stated derivative bounds follow by maximizing these expressions. The Gaussian integral gives $0<\varphi(t)<\sqrt{2\pi e}$, and symmetry gives $\varphi(-t)=2\varphi_0-\varphi(t)$. Thus $\bphi$ is odd and $|\bphi(t)|<\varphi_0$.

For the function gap, each negative link term in~\eqref{eq:chain} has magnitude less than $e\sqrt{2\pi e}<12$, and every positive link term is nonnegative. Therefore $F_T(u)>-12T$. Since $F_T(0)=-\varphi_0<0$, we obtain $F_T(0)-\inf_uF_T(u)\le12T$.

It remains to prove~\eqref{eq:chain-gradient}. All contributions to every partial derivative of $F_T$ are nonpositive. Indeed, for $2\le i\le T$, the incoming contribution is
\[
-\psi(-u_{i-1})\varphi'(-u_i)-\psi(u_{i-1})\varphi'(u_i),
\]
and, for $i<T$, the outgoing contribution is
\[
-\psi'(-u_i)\varphi(-u_{i+1})-\psi'(u_i)\varphi(u_{i+1}).
\]
At $i=1$, the initial contribution is $-\varphi'(u_1)$. If $|u_T|<1$, let $i$ be the first index with $|u_i|<1$. If $i=1$, then $\partial_1F_T(u)\le-\varphi'(u_1)<-1$. Otherwise $|u_{i-1}|\ge1$, so
\[
\psi(u_{i-1})+\psi(-u_{i-1})\ge\psi(1)=1.
\]
Since $\varphi'$ is even and $\varphi'(u_i)>1$, the incoming contribution to $\partial_iF_T(u)$ is less than $-1$. The other contributions are nonpositive. Hence $\norm{\nabla F_T(u)}>1$ in either case.
\end{proof}

\section{Verification of the Objective}\label{app:objective}

\subsection{Primal identity}

\begin{proof}[Proof of Lemma~\ref{lem:primal}]
For a fixed $x$, the unconstrained dual maximizer of~\eqref{eq:f} is
\[
y_i^{+*}(x)=\frac c\mu\big(\psi(u_i)-\bphi(u_{i+1})\big),\qquad
y_i^{-*}(x)=\frac c\mu\big(\psi(-u_i)+\bphi(-u_{i+1})\big).
\]
Lemma~\ref{lem:basic} implies
\[
|y_i^{\pm*}(x)|<\frac c\mu(e+\varphi_0)<\frac{4.8c}{\mu}<R.
\]
Thus this point is also the constrained maximizer and lies in the interior of the box.

To verify~\eqref{eq:cancellation}, write $a=\psi(u_i)$, $b=\psi(-u_i)$, and $v=u_{i+1}$. Expanding the two squares gives
\begin{align*}
\frac12(a-\bphi(v))^2+\frac12(b+\bphi(-v))^2
&=\frac12a^2+\frac12b^2+\bphi(v)^2-(a+b)\bphi(v)\\
&=b\varphi(-v)-a\varphi(v)+S_i(u).
\end{align*}
Summing over $i$ and using~\eqref{eq:H} proves the identity. Since $c^2=\mu L_0\lambda^2$, substitution of the maximizer into~\eqref{eq:f} yields
\[
\Phi(x)=L_0\lambda^2\left(H(u)+\frac12\norm{q(u)}^2\right)
=L_0\lambda^2F_T(u).
\]
\end{proof}

\subsection{Joint smoothness, strong concavity, and the gap}

\begin{proof}[Proof of Lemma~\ref{lem:class}]
The dual set is a bounded box containing zero, so Assumption~\ref{asm:domain} holds. The function is infinitely differentiable on the ambient space, and $\nabla_{yy}^2f=-\mu I$ proves Assumption~\ref{asm:concave}.

To verify joint smoothness on the prescribed domain, write
\[
\nabla^2 f(x,y)=\begin{bmatrix}A&B\\B^\top&-\mu I\end{bmatrix},
\qquad A=\nabla^2_{xx}f(x,y),\quad B=\nabla^2_{xy}f(x,y).
\]
Every summand in~\eqref{eq:f} depends on at most one primal coordinate, so $A$ is diagonal. Let $h_k(u)=\partial_k H(u)$ and use the convention $y_0^\pm=y_T^\pm=0$. Then
\begin{align*}
A_{kk}=L_0\partial_k h_k(u)+\frac c{\lambda^2}
\big[&y_k^+\psi''(u_k)+y_k^-\psi''(-u_k)\\
&-y_{k-1}^+\varphi''(u_k)+y_{k-1}^-\varphi''(-u_k)\big].
\end{align*}
At most one of the two signs of $\psi$, $\psi'$, and $\psi''$ can contribute at a given $u_k$. Differentiating~\eqref{eq:H} and applying Lemma~\ref{lem:basic} gives
\[
|\partial_k h_k(u)|
\le1+\frac{54}{e}+32.5e+32.5\varphi_0+2(e+\varphi_0)<187.
\]
The remaining terms are bounded using $|y_i^\pm|\le R=5c/\mu$:
\[
\frac c{\lambda^2}
\left|y_k^+\psi''(u_k)+y_k^-\psi''(-u_k)
-y_{k-1}^+\varphi''(u_k)+y_{k-1}^-\varphi''(-u_k)\right|
\le\frac{5c^2}{\mu\lambda^2}(32.5+2)=172.5L_0.
\]
Consequently, $\norm A\le359.5L_0$.

For each link $i\in[T-1]$, the nonzero cross derivatives are
\begin{align*}
B_{i,y_i^+}&=\frac c\lambda\psi'(u_i),&
B_{i,y_i^-}&=-\frac c\lambda\psi'(-u_i),\\
B_{i+1,y_i^+}&=-\frac c\lambda\varphi'(u_{i+1}),&
B_{i+1,y_i^-}&=-\frac c\lambda\varphi'(-u_{i+1}).
\end{align*}
The absolute row sums are at most $12.3c/\lambda$, and the absolute column sums are at most $6.2c/\lambda$. For the row bound we use the deliberately loose estimate $2(4.46)+2\sqrt e<12.3$; using the fact that at most one of $\psi'(u_i)$ and $\psi'(-u_i)$ is nonzero would improve this constant, but is unnecessary for the stated smoothness bound. Hence
\[
\norm B\le\sqrt{12.3\cdot6.2}\,\frac c\lambda
\le8.8\sqrt{\mu L_0}\le8.8L_0.
\]
By splitting the Hessian into its diagonal blocks and off-diagonal blocks,
\[
\norm{\nabla^2 f(x,y)}
\le\max\{\norm A,\mu\}+\norm B
\le368.3L_0<370L_0=L.
\]
The domain $\R^T\times\Y$ is convex, so integration along line segments proves Assumption~\ref{asm:smooth}.

Finally, Lemmas~\ref{lem:basic} and~\ref{lem:primal} imply
\[
\Phi(0)-\inf_x\Phi(x)
\le12L_0\lambda^2T
=\frac{48\eps^2T}{L_0}\le\Delta.
\]
The primal function is bounded below because $F_T$ is bounded below. This proves Assumption~\ref{asm:gap}.

For the additional claims, $\norm B\le8.8\sqrt{\mu L_0}<\sqrt{\mu L}$ proves~\eqref{eq:instance-cross}. All scalar components of $H$ and $q$ are bounded, so $f(\cdot,y)$ is bounded below for every fixed $y\in\Y$. Finally, since at most one of $\psi(u_i)$ and $\psi(-u_i)$ is nonzero,
\[
S_i(u)\le\frac{e^2}{2}+e\varphi_0+\varphi_0^2<13.6.
\]
Using $\Phi(0)=-L_0\lambda^2\varphi_0$ and the definition of $H$ gives
\begin{align*}
G_0&\le L_0\lambda^2\big(\sqrt{2\pi e}-\varphi_0+13.6(T-1)\big)\\
&\le13.6L_0\lambda^2T
\le\frac{13.6}{12}\Delta<1.14\Delta.
\end{align*}
The domain and differentiability requirements are already verified. Hence $f\in\mathcal M(L,\mu,1.14\Delta)$.
\end{proof}

\section{Proof of the Chain Invariant}\label{app:progress}

\begin{proof}[Proof of Lemma~\ref{lem:progress}]
With $y_0^\pm=y_T^\pm=0$, differentiation of~\eqref{eq:f} gives
\begin{align}\label{eq:gx}
\partial_{x_k}f(x,y)=L_0\lambda h_k(u)+\frac c\lambda
\big[&y_k^+\psi'(u_k)-y_k^-\psi'(-u_k)\notag\\
&-y_{k-1}^+\varphi'(u_k)-y_{k-1}^-\varphi'(-u_k)\big]
\end{align}
for $k\in[T]$, and
\begin{align}\label{eq:gy}
\partial_{y_k^+}f(x,y)&=c\big(\psi(u_k)-\bphi(u_{k+1})\big)-\mu y_k^+,\\
\partial_{y_k^-}f(x,y)&=c\big(\psi(-u_k)+\bphi(-u_{k+1})\big)-\mu y_k^-.\notag
\end{align}
for $k\in[T-1]$. Separability of $H$ implies that $h_k$ depends only on $u_k$. In addition, $h_k(u)=0$ when $u_k=0$ and $k\ge2$.

The first query is at the origin. Equations~\eqref{eq:gx}--\eqref{eq:gy} give $g_x^0=-L_0\lambda\sqrt e\,e_1$ and $g_y^0=0$. Thus $P_1=1$ and no dual link is discovered.

We prove the following invariant until the first time $P_t=T$: every discovered dual link has index at most $P_t$. Fix a time $t$ for which $P=P_t<T$ and the invariant holds. All primal indices below lie in $[T]$, and all dual-link indices lie in $[T-1]$. The support restrictions give $x_k^t=0$ and $y_k^{t,\pm}=0$ for indices $k>P$.

If link $P$ has not been discovered, then $y_P^{t,\pm}=0$ as well. Equation~\eqref{eq:gx} shows that $g_{x_k}^t=0$ for every $k\ge P+1$, so $P_{t+1}=P$. Equation~\eqref{eq:gy} shows that no dual link with index larger than $P$ can be revealed. At link $P$, its exact dual gradient is $c\psi(\pm u_P)$. If $x_P^t=0$, both components vanish. Otherwise $j(x^t)=P$, and the oracle returns the two components as $(\xi^t/p)c\psi(\pm u_P)$. Hence link $P$ can first be discovered only when $\xi^t=1$. The invariant is preserved.

If link $P$ has already been discovered, then~\eqref{eq:gx} gives $g_{x_k}^t=0$ for all $k\ge P+2$, and therefore $P_{t+1}\le P+1$. For each $k\ge P+1$, the entries $u_k,u_{k+1},y_k^{t,\pm}$ vanish, so~\eqref{eq:gy} gives zero on link $k$. The perturbation in~\eqref{eq:oracle} also vanishes there because $j(x^t)\le P$. Thus no dual link with index larger than $P$ is discovered on this call, and the invariant again holds.

Every increase from $P$ to $P+1$ therefore requires a distinct earlier success. Indeed, link $P$ cannot be discovered before primal coordinate $P$, or on the same call that reveals that coordinate. Its subsequent discovery requires $\xi^t=1$ while $P_t=P$. Since $P_t$ is nondecreasing, counting these discoveries gives $P_N-1\le\sum_{t=0}^{N-1}\xi^t$ until $P_N=T$. Thereafter $P_N=T$ and at least $T-1$ successes have already occurred, so the inequality continues to hold. This proves~\eqref{eq:progress}.
\end{proof}

\section{Specialization of the SAPD+ Upper Bound}\label{app:sapd-specialization}

We give the parameter correspondence used in~\eqref{eq:sapd-upper}. All references to the SAPD+ paper in this appendix use arXiv:2205.15084v4 of \citet{zhang2022sapd}. In their composite formulation, take the primal regularizer to be zero and the dual regularizer to be the indicator $\iota_{\Y}$. Their subdifferentials contain zero at every point of their respective domains, so the bounded-minimum-subgradient condition in their Assumption~5 holds. Their Theorem~3 then permits the unbounded primal domain. The definition of $\mathcal M$ supplies the required lower boundedness of every primal slice.

Joint smoothness permits all four block Lipschitz bounds to be set to $L$, and the weak-convexity parameter can be set to $\gamma=L$. Their equation~(4) adds $(\mu_x+\gamma)\norm{x-\bar x}^2/2$ to the objective, and their envelope parameter is $(\mu_x+\gamma)^{-1}$. Thus $\mu_x=\gamma=L$ gives the proximal term $L\norm{x-\bar x}^2$ and exactly $\eta=1/(2L)$. Their gap function, defined after equation~(6), specializes at $(x_0,y_0)=(0,0)$ to
\[
\mathcal G(0,0)=\sup_{y\in\Y}f(0,y)-\inf_{x\in\R^{d_x}}f(x,0)=G_0\le G.
\]

For a target $\delta>0$, write $\tau_x,\tau_y$ for the two step sizes to avoid confusing a step size with our noise budget $\sigma$. Set both partial-gradient variance bounds to $\sigma^2$, which is valid under Definition~\ref{def:oracle-model}. Their Theorem~1 explicitly sets the momentum parameter to $\theta=1$. Specializing its equation~(7) and the outer-iteration bound following equation~(8) gives
\begin{align*}
\tau_x&=\min\left\{\frac1{4L},\frac{\delta^2}{480L\sigma^2}\right\},&
\tau_y&=\min\left\{\frac1{3L},\frac{\delta^2}{4512L\sigma^2}\right\},\\
N_{\rm in}&=\left\lceil33\max\left\{\frac4{L\tau_x},\frac8{\mu\tau_y}\right\}\right\rceil,&
K&=\left\lceil\frac{96LG}{\delta^2}+1\right\rceil,
\end{align*}
The constants $480$, $4512$, and $33$ come from their equation~(7), and $96$ comes from the bound following their equation~(8). When $\sigma=0$, the fractions involving $1/\sigma^2$ are interpreted as $+\infty$. The resulting iteration counts satisfy
\[
N_{\rm in}=O\!\left(\kappa\max\{1,\sigma^2/\delta^2\}\right),\qquad
K+1=O(1+LG/\delta^2).
\]
Theorems~1 and~3 of the cited work ensure that the outer iterates $x^0,\ldots,x^K$ obey
\[
\frac1{K+1}\sum_{t=0}^K\E\norm{\nabla\Phi_{1/(2L)}(x^t)}^2\le\delta^2.
\]
For an independent uniform index $J\in\{0,\ldots,K\}$, Jensen's inequality gives $\E\norm{\nabla\Phi_{1/(2L)}(x^J)}\le\delta$. No gradient-norm evaluation is needed to select the output.

Each partial-gradient evaluation can be implemented with a fresh full-oracle call and discarding the unused component. Definition~\ref{def:oracle-model} then gives the required conditional unbiasedness and variance bounds for each partial gradient. An inner iteration uses only a constant number of such calls. The primal updates use observed gradients and previously formed iterates; the dual projection is onto a box containing zero. Both preserve zero coordinates until the corresponding oracle coordinate has appeared. Averages and the uniformly selected outer iterate have the same property. Thus this implementation is zero-respecting and uses
\[
O\!\left(\kappa\max\{1,\sigma^2/\delta^2\}(1+LG/\delta^2)\right)
\]
calls. Setting $\delta=\eps$ proves~\eqref{eq:sapd-upper}.

\section{A Matching Primal-Gradient Result}\label{app:gradient-tight}

Define the subclass
\[
\mathcal C(L,\mu,G)=\{f\in\mathcal M(L,\mu,G):
\norm{\nabla^2_{xy}f(x,y)}\le\sqrt{\mu L}\ \text{throughout the domain}\}.
\]
Lemma~\ref{lem:comparison} gives a constant comparison between primal and envelope gradients on this class.

\begin{corollary}[Primal-gradient tightness under controlled cross derivatives]\label{cor:gradient-tight}
Under~\eqref{eq:tight-regime}, the worst-case oracle complexity of zero-respecting algorithms over $\mathcal C(L,\mu,G)$ for~\eqref{eq:criterion} is
\[
\Theta\!\left(\frac{\kappa LG\sigma^2}{\eps^4}\right).
\]
The upper bound is attained by SAPD+ run to envelope accuracy $\eps/2$.
\end{corollary}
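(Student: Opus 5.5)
The argument has two directions. Both rest on the constant comparison of Lemma~\ref{lem:comparison}, which applies on $\mathcal C(L,\mu,G)$ because $b=\sqrt{\mu L}$ gives $\eta(L+b^2/\mu)=1$.

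\textbf{Lower bound.} I would reuse the hard instance from the proof of Theorem~\ref{thm:sapd-tight}, but without the factor $2$ in accuracy. Apply Theorem~\ref{thm:lower} with $\Delta'=G/1.14$ and accuracy $\eps$. The regime condition $\eps^2\le L\Delta'/(96C)$ follows from $\eps^2\le LG/(440C)$, since $96\cdot1.14<440$. By Lemma~\ref{lem:class}, the instance lies in $\mathcal M(L,\mu,G)$ and satisfies $\norm{\nabla^2_{xy}f}\le8.8\sqrt{\mu L_0}<\sqrt{\mu L}$, so it belongs to $\mathcal C(L,\mu,G)$. The oracle of Lemma~\ref{lem:oracle} has variance at most $\sigma^2$. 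Theorem~\ref{thm:lower} then shows that every zero-respecting algorithm with
\[
N\le \frac{c_0}{1.14}\,\frac{\kappa LG\sigma^2}{\eps^4}
\]
has $\E\norm{\nabla\Phi(\hx^N)}>\eps$. This gives the $\Omega(\kappa LG\sigma^2\eps^{-4})$ lower bound directly for criterion~\eqref{eq:criterion}, with no stationarity conversion needed.

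\textbf{Upper bound.} Every $f\in\mathcal C(L,\mu,G)$ lies in $\mathcal M(L,\mu,G)$, so the SAPD+ specialization of Appendix~\ref{app:sapd-specialization} applies verbatim. Run it with target $\delta=\eps/2$ and output a uniformly selected outer iterate $x^J$. This guarantees $\E\norm{\nabla\Phi_\eta(x^J)}\le\eps/2$. Applying~\eqref{eq:comparison} pointwise and taking expectations gives
\[
\E\norm{\nabla\Phi(x^J)}\le 2\,\E\norm{\nabla\Phi_\eta(x^J)}\le\eps .
\]
The cost is $O(\kappa\max\{1,4\sigma^2/\eps^2\}(1+4LG/\eps^2))$ calls. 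Under~\eqref{eq:tight-regime} we have $\sigma\ge\eps$ and $LG/\eps^2\ge440C$, so this is $O(\kappa LG\sigma^2\eps^{-4})$. Zero-respecting implementability is inherited from Appendix~\ref{app:sapd-specialization}.

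\textbf{Main obstacle.} The only subtle point is that Lemma~\ref{lem:comparison} is used over the whole class $\mathcal C$, not just on the hard instance. Its proof requires the cross-derivative bound throughout $\R^{d_x}\times\Y$, so that $y^*$ is $(b/\mu)$-Lipschitz even at the boundary of the box. It also requires $\Phi$ to be smooth, so that it can be evaluated at the proximal point $z$. Both conditions are built into the definition of $\mathcal C$ and the proof of Lemma~\ref{lem:comparison}. I would check these and then combine the two bounds.
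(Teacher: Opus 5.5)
Your proposal is correct and follows the paper's proof. The upper bound runs SAPD+ to envelope accuracy $\eps/2$ and applies the factor-$2$ comparison~\eqref{eq:comparison} at the output, and the lower bound applies Theorem~\ref{thm:lower} to the constructed instance, which lies in $\mathcal C(L,\mu,G)$ by Lemma~\ref{lem:class}. The only difference is cosmetic: the paper reuses the instance from Theorem~\ref{thm:sapd-tight}, built at accuracy $2\eps$ so that it forces $\E\norm{\nabla\Phi(\hx)}>2\eps$, whereas you build it directly at accuracy $\eps$, which is equally valid under~\eqref{eq:tight-regime} and gives a slightly better constant.
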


\begin{proof}
For the upper bound, use Appendix~\ref{app:sapd-specialization} with $\delta=\eps/2$ and apply~\eqref{eq:comparison} at its output. The call budget remains $O(\kappa LG\sigma^2\eps^{-4})$. For the lower bound, the instance used in the proof of Theorem~\ref{thm:sapd-tight} lies in $\mathcal C$ and forces $\E\norm{\nabla\Phi(\hx)}>2\eps$ for $\Omega(\kappa LG\sigma^2\eps^{-4})$ calls. It therefore also rules out~\eqref{eq:criterion} within that budget.
\end{proof}

This corollary uses a primal-dual gap budget. The distinction from a primal-gap budget is substantive: for $b>0$, the function
\[
f(x,y)=\frac L2x^2+by-\frac\mu2 y^2,\qquad
\Y=[-b/\mu,b/\mu],
\]
has $\Phi(0)-\inf_x\Phi(x)=0$ but $G_0=b^2/(2\mu)$. Hence the stated SAPD+ guarantee does not yield a bound depending only on the primal gap by replacing $G$ with $\Delta$. This example concerns the initialization parameter in that guarantee; it does not establish a lower bound on the actual number of iterations taken by SAPD+.

\begingroup
\setlength{\abovedisplayskip}{8pt plus 2pt minus 2pt}
\setlength{\belowdisplayskip}{8pt plus 2pt minus 2pt}
\setlength{\abovedisplayshortskip}{0pt plus 2pt}
\setlength{\belowdisplayshortskip}{5pt plus 2pt minus 2pt}
\section{A Matching Bound for the Construction}\label{sec:matching}

This supplementary result concerns the explicit objective and oracle used to prove Theorem~\ref{thm:lower}. Let $N_*(f,\Ocal;\eps)$ be the smallest deterministic call budget with which a zero-respecting algorithm can satisfy~\eqref{eq:criterion} on this pair. The algorithm in the upper bound is given the construction and its parameters.

\begin{proposition}[Matching bound for the construction]\label{thm:pair}
Under~\eqref{eq:regime}, let $(f,\Ocal)$ be the pair defined in Section~\ref{sec:construction}, with chain length $T$ and discovery probability $p$. Then
\begin{equation}\label{eq:pair-finite}
\frac{T-1}{2p}<N_*(f,\Ocal;\eps)
\le\left\lceil8\left(T+\frac{T-1}{p}\right)\right\rceil.
\end{equation}
Consequently, for every $\sigma\ge0$,
\begin{equation}\label{eq:pair-rate}
N_*(f,\Ocal;\eps)
=\Theta\!\left(\frac{L\Delta}{\eps^2}
+\frac{\kappa L\Delta\sigma^2}{\eps^4}\right).
\end{equation}
\end{proposition}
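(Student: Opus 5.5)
The lower bound in \eqref{eq:pair-finite} is exactly what the proof of Theorem~\ref{thm:lower} establishes. For every $N\le(T-1)/(2p)$, Lemma~\ref{lem:progress} and Markov's inequality give $\Prob(P_N<T)\ge1/2$. On that event $\norm{\nabla\Phi(\hx^N)}>2\eps$, so $\E\norm{\nabla\Phi(\hx^N)}>\eps$. Hence any sufficient budget must exceed $(T-1)/(2p)$. The work is therefore in the upper bound, which requires a concrete zero-respecting algorithm that knows the construction.

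The algorithm drives the chain to a stationary point by walking along it. It maintains a frontier $P$ and keeps $u_k=x_k/\lambda=1$ (i.e.\ $x_k=\lambda$) for $k\le P$. Because $\psi(1)=1$, $\bphi(1)>0$ and the signs in~\eqref{eq:chain} work out, the target output is $u=\mathbf 1$. There $\nabla F_T(\mathbf 1)$ has components like $-\psi'(1)\varphi(1)-\varphi'(1)$ for inner coordinates, which is not small. So instead the target should be $u$ with every $|u_k|$ large, say $u_k=U$ for a fixed constant $U$ with $\varphi'(U)$ and $\psi'(U)$ tiny. Then $\nabla F_T(U\mathbf 1)$ has entries of size $O(e^{-U^2/2}+\psi'(U))$. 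The quantity $\psi'(U)=4\psi(U)/(2U-1)^3$ decays only polynomially, which is the issue. To handle it, I would bound $\norm{\nabla F_T}$ by $\sqrt T$ times the entry size, which is too large. The better choice is to use the standard fact from \citet{carmon2020lower} that a point with all $|u_k|$ sufficiently large is not automatically stationary. I would replace this by directly exhibiting the global minimizer direction: take $u_k\to\infty$ coordinatewise, where the per-entry gradient is $\psi'(u_k)\varphi(u_{k+1})$. This fails too, so I would settle on an explicit finite $U$ with per-entry gradient at most $\eps/(L_0\lambda\sqrt T)$. That makes $U$ depend on $T$, which is acceptable because only the call count matters, not the magnitudes.

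The call counting proceeds link by link. To advance from frontier $P$, the algorithm first queries at the current $x$ (with $x_P\neq0$) and $y=0$ repeatedly until a Bernoulli success reveals dual link $P$. The waiting time is geometric with mean $1/p$. It then sets $y_P^\pm$ to a nonzero multiple of the revealed entries (ideally $y^*$ on that link) and queries once more, which by~\eqref{eq:gx} produces a nonzero $x_{P+1}$ gradient via $-y_P^+\varphi'(u_{P+1})$. It sets $x_{P+1}$ accordingly, using a constant number of extra queries (at most $8$ per link, including the setup and final output steps). The total number of calls is therefore $8T+\sum_{k=1}^{T-1}G_k$ with $G_k\sim\mathrm{Geom}(p)$. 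To convert this into a deterministic budget, I would run for $\lceil8(T+(T-1)/p)\rceil$ calls. The probability that the sum of geometrics exceeds $8(T-1)/p$ is at most $1/8$ by Markov's inequality. On failure, the output is some fallback point whose gradient norm is bounded by $\sup\norm{\nabla\Phi}=O(L_0\lambda\sqrt T)$. This bound is too large to average away, so I would instead rely on a Chernoff bound for sums of geometrics, which gives failure probability $e^{-\Omega(T)}$. With $T\ge2$ this alone is not enough, so the factor $8$ must be chosen to make the failure contribution, weighted by $\sqrt T$, small.

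I expect the main obstacle to be certifying $\norm{\nabla\Phi(\hx)}\le\eps$ exactly at the output while keeping the failure event negligible. The rate \eqref{eq:pair-rate} then follows by substituting $T=\Theta(L\Delta/\eps^2)$ and $p^{-1}=\Theta(1+\kappa\sigma^2/\eps^2)$.
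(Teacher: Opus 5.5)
Your lower bound, your discovery phase, the geometric waiting-time count with Markov's bound $1/8$ on truncation, and the substitution giving~\eqref{eq:pair-rate} all agree with the paper. In the discovery phase you repeat the query at $x=\lambda\sum_{i\le P}e_i$, $y=0$ until a Bernoulli success reveals $y_P^+$, and then make one query with $y_P^+\neq0$ to expose $x_{P+1}$ through $-\frac{c}{\lambda}y_P^+\varphi'(0)$.

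The genuine gap is the failure case, which you leave unresolved and misdiagnose. You assume the fallback output can have gradient norm as large as $\sup\norm{\nabla\Phi}=O(\eps\sqrt T)$. You then appeal to a Chernoff tail that you never verify, even though the factor $8$ in~\eqref{eq:pair-finite} is fixed and cannot be tuned. The missing observation is that $\hx=0$ is always a legal zero-respecting output. It satisfies $\nabla\Phi(0)=L_0\lambda\nabla F_T(0)=-2\sqrt e\,\eps\,e_1$, a $T$-independent multiple of $\eps$. With this fallback, the Markov bound you already have suffices: truncation contributes at most $\frac18\cdot2\sqrt e\,\eps<0.42\,\eps$ to $\E\norm{\nabla\Phi(\hx)}$, and no concentration inequality is needed.

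The success output also needs strict slack, which your stated target lacks. A per-entry bound of $\eps/(L_0\lambda\sqrt T)$ gives only $\norm{\nabla\Phi(\hx)}\le\eps$ on success, so any positive failure contribution can push the expectation above $\eps$. Take $U=10\sqrt T$ instead. Then $\psi'(U)=4\psi(U)/(2U-1)^3\le4e/U^3$ and $e^{-U^2/2}\le U^{-3}$ give per-entry bounds $(e\sqrt e+4e\sqrt{2\pi e})/U^3<50/U^3$. Hence $\norm{\nabla F_T(U\mathbf{1}_T)}\le50\sqrt T/U^3=1/(20T)$, and so $\norm{\nabla\Phi(\lambda U\mathbf{1}_T)}\le\eps/(10T)<\eps/2$. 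Contrary to your worry, the polynomial decay of $\psi'$ is harmless once $U$ grows with $T$, because $U^{-3}$ beats the $\sqrt T$ factor. Combining the two cases gives $\E\norm{\nabla\Phi(\hx)}\le(\tfrac12+\tfrac{\sqrt e}{4})\eps<\eps$ within $\lceil8(T+(T-1)/p)\rceil$ calls, which is the paper's argument.
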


This upper bound uses the specified oracle and a procedure given the explicit construction. It does not assert a uniform upper bound over $\F(L,\mu,\Delta)$. The absence of $\sqrt\kappa$ in the deterministic term is compatible with Corollary~\ref{cor:combined}: this particular family is easier without noise than the deterministic worst-case instances.

We now prove Proposition~\ref{thm:pair}. The lower bound is already contained in the preceding proof. To establish the upper bound, we give a procedure that first discovers all primal coordinates and then returns an explicit point with small primal gradient. This additional step is necessary because discovering a coordinate does not itself imply stationarity.

\subsection{Discovering the coordinates}

The first query at the origin returns
\[
g_x^0=-L_0\lambda\sqrt e\,e_1,\qquad g_y^0=0,
\]
so coordinate $1$ is discovered. Suppose coordinates $1,\ldots,j$ have been discovered and $j<T$. Query
\[
x=\lambda\sum_{i=1}^j e_i,\qquad y=0.
\]
The returned dual frontier coordinate is
\[
g_{y_j^+}=\frac{\xi c}{p},
\]
because $\psi(1)=1$ and $x_{j+1}=0$. Repeating this query therefore reveals $y_j^+$ after a geometric number of trials with success probability $p$. Once the link has been discovered, keep the same $x$ and query at $y=(R/2)e_{y_j^+}$. The primal oracle then returns
\begin{equation}\label{eq:reveal}
g_{x_{j+1}}=-\frac{cR}{2\lambda}\varphi'(0)\ne0,
\end{equation}
revealing the next primal coordinate. All queries satisfy the support restriction and the box constraint.

Let $\tau$ be the number of calls until all $T$ primal coordinates have been discovered by this procedure. With the geometric distribution supported on $\{1,2,\ldots\}$,
\begin{equation}\label{eq:waiting}
\tau=T+\sum_{j=1}^{T-1}G_j,\qquad
G_j\overset{\mathrm{iid}}\sim\operatorname{Geom}(p),\qquad
m:=\E\tau=T+\frac{T-1}{p}.
\end{equation}
The $T$ deterministic calls consist of the initial query and one primal-revelation query for each link.

\begin{algorithm}[t]
\caption{A zero-respecting procedure for the constructed pair}\label{alg:discovery}
\small
\begin{algorithmic}[1]
\Require Parameters $T,\lambda,R,p$ from the construction
\State $N\gets\lceil8(T+(T-1)/p)\rceil$
\State Query $\Ocal(0,0)$; set $n\gets1$
\For{$j=1,\ldots,T-1$}
  \State $x\gets\lambda\sum_{i=1}^j e_i$
  \Repeat
    \If{$n=N$} \State \Return $0$ \EndIf
    \State $(g_x,g_y)\gets\Ocal(x,0)$; $n\gets n+1$
  \Until{$g_{y_j^+}\ne0$}
  \If{$n=N$} \State \Return $0$ \EndIf
  \State Query $\Ocal(x,(R/2)e_{y_j^+})$; $n\gets n+1$
\EndFor
\State \Return $10\lambda\sqrt T\,\mathbf1_T$
\end{algorithmic}
\end{algorithm}

\subsection{Returning an approximate stationary point}

\begin{lemma}[Terminal point]\label{lem:terminal}
For $M=10\sqrt T$ and $x_{\mathrm{good}}=\lambda M\mathbf1_T$,
\begin{equation}\label{eq:terminal}
\norm{\nabla\Phi(x_{\mathrm{good}})}\le\frac{\eps}{10T}<\frac\eps2,
\qquad \norm{\nabla\Phi(0)}=2\sqrt e\,\eps.
\end{equation}
\end{lemma}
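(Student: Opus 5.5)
By Lemma~\ref{lem:primal}, $\nabla\Phi(x)=L_0\lambda\nabla F_T(x/\lambda)$, so it suffices to bound $\nabla F_T$ at $u=M\mathbf 1_T$ with $M=10\sqrt T$. We have $L_0\lambda=2\eps$, and the target becomes $\norm{\nabla F_T(M\mathbf1_T)}\le 1/(20T)$. At $u_i=M\ge 10\sqrt2>1/2$ for all $i$, every $\psi(-u_i)$ and $\psi'(-u_i)$ vanishes. The partial derivatives computed in the proof of Lemma~\ref{lem:basic} therefore reduce to
\[
\partial_1F_T=-\varphi'(M)-\psi'(M)\varphi(M),\qquad
\partial_iF_T=-\psi(M)\varphi'(M)-\psi'(M)\varphi(M)\ (2\le i\le T-1),
\]
and $\partial_TF_T=-\psi(M)\varphi'(M)$.

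The plan is to bound the two kinds of terms separately. First, $\varphi'(M)=\sqrt e\,e^{-M^2/2}=\sqrt e\,e^{-50T}$ and $\psi(M)<e$, so every term containing $\varphi'(M)$ is at most $e^{3/2}e^{-50T}$. Second, with $s=2M-1$ we have $\psi'(M)=4\psi(M)/s^3\le 4e/s^3$. Using $\varphi(M)<\sqrt{2\pi e}$, each term $\psi'(M)\varphi(M)$ is at most $4e\sqrt{2\pi e}/(2M-1)^3$. Since $2M-1\ge 19\sqrt T$ for $T\ge 2$ (using $M\ge 10\sqrt2$ and $1\le\sqrt T$), we get $(2M-1)^3\ge 19^3T^{3/2}$, which gives a per-coordinate bound of order $e\sqrt{2\pi e}\cdot4/(6859\,T^{3/2})\approx 6\cdot10^{-3}T^{-3/2}$.

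Summing over coordinates gives
\[
\norm{\nabla F_T}\le\sqrt T\cdot\max_i|\partial_iF_T|\lesssim 6\cdot10^{-3}T^{-1}+\sqrt T\,e^{3/2}e^{-50T}.
\]
The exponential term is negligible for $T\ge2$; it is at most about $10^{-40}$. The total is comfortably below $1/(20T)=0.05/T$, so $\norm{\nabla\Phi}\le 2\eps\cdot 0.05/T=\eps/(10T)<\eps/2$ since $T\ge2$.

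For the second claim, at $u=0$ all $\psi(\pm0)$ and $\psi'(\pm0)$ vanish, so only $\partial_1F_T=-\varphi'(0)=-\sqrt e$ survives. This gives $\norm{\nabla\Phi(0)}=2\eps\sqrt e$.

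The main obstacle is the $\psi'(M)\varphi(M)$ term, which decays only polynomially in $M$. The choice $M\propto\sqrt T$ is exactly what makes the $\sqrt T$ factor from summing $T$ coordinates beaten by the $M^{-3}$ decay, so the key step is the careful constant check $4e\sqrt{2\pi e}/19^3<0.05$.
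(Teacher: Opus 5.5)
Your proof is correct and follows the paper's argument. You reduce to $\nabla F_T(M\mathbf 1_T)$ via $\nabla\Phi(x)=L_0\lambda\nabla F_T(x/\lambda)$, drop the vanishing negative-argument terms, bound each coordinate by a Gaussian-tail term plus $\sqrt{2\pi e}\,\psi'(M)$, and pay a factor $\sqrt T$ to pass to the Euclidean norm. The only difference is bookkeeping: the paper absorbs the tail via $e^{-M^2/2}\le M^{-3}$ and uses $(2M-1)^3\ge M^3$ to reach the single constant $50\sqrt T/M^3=1/(20T)$, whereas you use the sharper $(2M-1)^3\ge 19^3T^{3/2}$ and handle the $e^{-50T}$ term separately (harmless, since $T^{3/2}e^{3/2}e^{-50T}$ is negligible against the remaining margin $\approx 0.043$ for all $T\ge 2$).
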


\begin{proof}
Since $M\ge10$, all negative-argument $\psi$ and $\psi'$ terms vanish. Differentiating~\eqref{eq:chain} gives, for each $i\in[T]$,
\[
|[\nabla F_T(M\mathbf1_T)]_i|
\le e\sqrt e\,e^{-M^2/2}+\sqrt{2\pi e}\,\psi'(M).
\]
We have $\psi'(M)=4\psi(M)/(2M-1)^3\le4e/M^3$ and $e^{-M^2/2}\le M^{-3}$. Using $e\sqrt e+4e\sqrt{2\pi e}<50$, we obtain
\[
\norm{\nabla F_T(M\mathbf1_T)}
\le\frac{50\sqrt T}{M^3}=\frac{1}{20T}.
\]
Multiplication by $L_0\lambda=2\eps$ proves the first claim. The second follows from $\nabla F_T(0)=-\sqrt e\,e_1$.
\end{proof}

\begin{proof}[Proof of Proposition~\ref{thm:pair}]
\enlargethispage{\baselineskip}
The lower bound in~\eqref{eq:pair-finite} follows from the proof of Theorem~\ref{thm:lower}, including the zero-query case. For the upper bound, run Algorithm~\ref{alg:discovery}, which truncates the discovery procedure after $N=\lceil8m\rceil$ calls. The output is $x_{\mathrm{good}}$ if discovery finishes and zero otherwise. The terminal point is legal because all primal coordinates have been discovered; zero is always a legal output. By~\eqref{eq:waiting} and Markov's inequality,
\[
\Prob(\tau>N)\le\frac{m}{N}\le\frac18.
\]
Lemma~\ref{lem:terminal} therefore gives
\[
\E\norm{\nabla\Phi(\hx)}
\le\frac\eps2+\frac18(2\sqrt e\,\eps)
=\left(\frac12+\frac{\sqrt e}{4}\right)\eps<\eps.
\]
This proves~\eqref{eq:pair-finite} with a deterministic call budget.

Finally, $T\ge2$ and $0<p\le1$ imply that both bounds in~\eqref{eq:pair-finite} are of order $T/p$. By~\eqref{eq:parameters} and~\eqref{eq:p},
\[
T=\Theta\!\left(\frac{L\Delta}{\eps^2}\right),\qquad
\frac1p=1+\frac{\kappa\sigma^2}{4e^2C\eps^2}.
\]
The constant $C=370$ is fixed, so their product gives~\eqref{eq:pair-rate} with absolute implicit constants.
\end{proof}
\endgroup

\end{document}